\documentclass[12pt,a4paper,reqno,oneside]{amsart}
 
\usepackage[utf8]{inputenc}
\usepackage[T1]{fontenc}
\usepackage{lmodern}
\usepackage{amsmath}
\usepackage{amssymb}
\usepackage{amsthm}
\allowdisplaybreaks
\usepackage{geometry}
\usepackage{graphicx}
\usepackage{bbm}
\usepackage{float}
\usepackage{enumerate}
\usepackage{scrhack}
\usepackage{hyperref}
\usepackage{cleveref}

\usepackage{fancyhdr}



\newtheoremstyle{theorem}	
   {}       			
   {}      				
   {\itshape}  				
   {\parindent} 			
   {\bfseries} 				
   {}         				
   {.7em}      				
   {}          				

\newtheoremstyle{definition}
  {}       				
  {}      				
  {} 		 				
  {\parindent} 				
  {\bfseries} 				
  {}         				
  {.7em}      				
  {}          				
  
\newtheoremstyle{remark}
  {}       			
  {}      			
  {} 					
  {\parindent} 			
  {\itshape} 			
  {.}         			
  {.7em}      			
  {}          			

\theoremstyle{theorem}
\newtheorem{theorem}{Theorem}[section]
\newtheorem{corollary}[theorem]{Corollary}
\newtheorem{lemma}[theorem]{Lemma}
\newtheorem{proposition}[theorem]{Proposition}

\theoremstyle{remark}
\newtheorem{remark}[theorem]{Remark}

\theoremstyle{definition}

\newtheorem{definition}[theorem]{Definition}

\newcommand{\Ebb}{\mathbb{E}}
\newcommand{\Fbb}{\mathbb{F}}

\newcommand{\Nbb}{\mathbb{N}}
\newcommand{\Pbb}{\mathbb{P}}
\newcommand{\Qbb}{\mathbb{Q}}
\newcommand{\Rbb}{\mathbb{R}}

\newcommand{\Bcal}{\mathcal{B}}
\newcommand{\Ccal}{\mathcal{C}}
\newcommand{\Ecal}{\mathcal{E}}
\newcommand{\Fcal}{\mathcal{F}}

\newcommand{\Kcal}{\mathcal{K}}

\newcommand{\Pcal}{\mathcal{P}}

\newcommand{\Lip}{\text{Lip}}

\newcommand{\Xo}{\overline{X}}

\DeclareMathOperator*{\conv}{conv}

\DeclareMathOperator*{\esssup}{ess\,sup}

\newcommand{\btilde}{\tilde{b}}
\newcommand{\bhat}{\hat{b}}

\newcommand{\Ep}[2]{\Ebb\negthinspace\left[\left\Vert #1 \right\Vert^{#2} \right]^{\frac{1}{#2}}\negthinspace}
\newcommand{\EpO}[2]{\Ebb\negthinspace\left[\left\Vert #1 \right\Vert^{#2} \right]\negthinspace}

\newcommand{\Eabs}[1]{\Ebb\negthinspace\left[\left\Vert #1 \right\Vert \right]\negthinspace}
\newcommand{\EW}[1]{\Ebb\negthinspace\left[ #1 \right]\negthinspace}

\usepackage{color}
\definecolor{darkgreen}{rgb}{0, .5, 0}
\definecolor{darkred}{rgb}{.5, 0, 0}

\begin{document}


\title[Multi-Dimensional MFSDEs with Irregular Drift]{Existence and Regularity of Solutions to Multi-Dimensional Mean-Field Stochastic Differential Equations with Irregular Drift}
\author[M.Bauer]{Martin Bauer}
\address{M. Bauer: Department of Mathematics, LMU, Theresienstr. 39, D-80333 Munich, Germany.}
\email{bauer@math.lmu.de}
\author[T. Meyer-Brandis]{Thilo Meyer-Brandis}
\address{T. Meyer-Brandis: Department of Mathematics, LMU, Theresienstr. 39, D-80333 Munich, Germany.}
\email{meyerbra@math.lmu.de}
\date{\today}
\maketitle


\begin{center}
\parbox{13cm}{
\begin{footnotesize}
\textbf{\textsc{Abstract.}} We examine existence and uniqueness of strong solutions of multi-dimensional mean-field stochastic differential equations with irregular drift coefficients. Furthermore, we establish Malliavin differentiability of the solution and show regularity properties such as Sobolev differentiability in the initial data as well as H\"older continuity in time and the initial data. Using the Malliavin and Sobolev differentiability we formulate a Bismut-Elworthy-Li type formula for mean-field stochastic differential equations, i.e. a probabilistic representation of the first order derivative of an expectation functional with respect to the initial condition.\\[0.2cm]
\textbf{\textsc{Keywords.}} McKean-Vlasov equation $\cdot$ mean-field stochastic differential equation $\cdot$ weak solution $\cdot$ strong solution $\cdot$ uniqueness in law $\cdot$ pathwise uniqueness $\cdot$ singular coefficients $\cdot$ Malliavin derivative $\cdot$ Sobolev derivative $\cdot$ Hölder continuity $\cdot$ Bismut-Elworthy-Li formula.
\end{footnotesize}
}
\end{center}

\section{Introduction}
	Let $(\Omega, \Fcal, \Fbb, \Pbb)$ be a complete filtered probability space. Throughout the manuscript let $T>0$ be a finite time horizon. Consider the mean-field stochastic differential equation, hereafter for short mean-field SDE,
	\begin{align}\label{eq:MFSDEGeneral}
		dX_t^x = b \left(t,X_t^x, \Pbb_{X_t^x} \right)dt + \sigma \left( t, X_t^x, \Pbb_{X_t^x} \right) dB_t, \quad t\in [0,T], \quad X_0^x = x \in \Rbb^d,
	\end{align}
	where $b: [0,T] \times \Rbb^d \times \Pcal_1(\Rbb^d) \to \Rbb^d$ is the drift coefficient, $\sigma: [0,T] \times \Rbb^d \times \Pcal_1(\Rbb^d) \to \Rbb^{d\times n}$ the diffusion coefficient, and $\Pbb_{X^x_t} \in \Pcal_1(\Rbb^d)$ denotes the law of $X^x_t$ with respect to the measure $\Pbb$. Here, $B = (B_t)_{t\in [0,T]}$ is $n$-dimensional Brownian motion and $\Pcal_1(\Rbb^d)$ is the space of probability measures over $(\Rbb^d, \Bcal(\Rbb^d))$ with finite first moment. \par
	Mean-field SDE \eqref{eq:MFSDEGeneral}, also called McKean-Vlasov equation, originates in the study on multi-particle systems with weak interaction and traces back to works of Vlasov \cite{Vlasov_VibrationalPropertiesofElectronGas}, Kac \cite{Kac_FoundationsOfKineticTheory}, and McKean \cite{McKean_AClassOfMarkovProcess}. In recent years the interest in mean-field SDEs increased due to the work of Lasry and Lions \cite{LasryLions_MeanFieldGames} on mean-field games and the related application in the fields of Economics and Finance, for example in the study of systemic risk, see e.g. \cite{CarmonaFouqueMousaviSun_SystemicRiskandStochasticGameswithDelay}, \cite{CarmonaFouqueSun_MFGandSystemicRisk}, \cite{FouqueIchiba_StabilityinaModelofInterbankLending}, \cite{FouqueSun_SystemicRiskIllustrated}, \cite{GarnierPapanicolaouYang_LargeDeviationsforMFModelofSystemicRisk}, \cite{KleyKlueppelbergReichel_SystemicRiskTroughContagioninaCorePeripheryStructuredBankingNetwork}, and the cited sources therein. Carmona and Delarue developed subsequently the theory on mean-field games in a mere probabilistic environment, cf. \cite{CarmonaDelarue_ProbabilisticAnalysisofMFG}, \cite{CarmonaDelarue_MasterEquation}, \cite{CarmonaDelarue_FBSDEsandControlledMKV}, \cite{CarmonaDelarue_Book}, \cite{CarmonaDelarueLachapelle_ControlofMKVvsMFG}, and \cite{CarmonaLacker_ProbabilisticWeakFormulationofMFGandApplications}.

	In this paper the focus lies on existence and uniqueness as well as regularity properties of solutions to multi-dimensional mean-field SDEs with additive noise, i.e. equations of the form
	\begin{align}\label{eq:MFSDEAdditive}
		dX_t^x = b \left(t,X_t^x, \Pbb_{X_t^x} \right)dt + dB_t, \quad t\in [0,T], \quad X_0^x = x \in \Rbb^d,
	\end{align}
	where $B$ is $d$-dimensional Brownian motion. In particular, we are interested in irregular drift coefficients $b$ that are merely measurable in the spatial variable.

	Existence and uniqueness of solutions to mean-field SDEs have been discussed in several works, cf. for example \cite{Bauer_StrongSolutionsOfMFSDEs}, \cite{BuckdahnDjehicheLiPeng_MFBSDELimitApproach}, \cite{BuckdahnLiPeng_MFBSDEandRelatedPDEs}, \cite{BuckdahnLiPengRainer_MFSDEandAssociatedPDE}, \cite{Chiang_MKVWithDiscontinuousCoefficients}, \cite{de2015strong}, \cite{JourdainMeleardWojbor_NonlinearSDEs}, \cite{LiMin_WeakSolutions}, \cite{mahmudov2006mckean}, and \cite{MishuraVeretennikov_SolutionsOfMKV}. Li and Min show in \cite{LiMin_WeakSolutions} the existence of a weak solution for a path dependent mean-field SDE, where the drift $b$ is assumed to be bounded and continuous in the law variable. Under the additional assumption that $b$ admits a modulus of continuity they prove uniqueness in law of the solution. In \cite{MishuraVeretennikov_SolutionsOfMKV}, the authors derive existence of a pathwisely unique strong solution for drift coefficients $b$ of at most linear growth that are continuous in the law variable with respect to the total variation metric. In order to prove their result, Mishura and Veretennikov use an approach similar to Krylov in his analysis of stochastic differential equations, cf. \cite{krylov1969ito}. The one-dimensional case of mean-field SDE \eqref{eq:MFSDEAdditive} is considered in \cite{Bauer_StrongSolutionsOfMFSDEs}. There, we show that mean-field SDE \eqref{eq:MFSDEAdditive} has a Malliavin differentiable pathwisely unique strong solution for drift coefficients $b$ admitting a modulus of continuity in the law variable and having a decomposition 
	\begin{align}\label{eq:decomp}
		b(t,y,\mu) := \bhat(t,y,\mu) + \btilde(t,y,\mu),
	\end{align}
	where $\bhat$ is merely measurable and bounded and $\btilde$ is of at most linear growth and Lipschitz continuous in the spatial variable. We remark that in \cite{Bauer_StrongSolutionsOfMFSDEs} the decomposition \eqref{eq:decomp} is required to establish regularity properties such as Malliavin differentiability of the strong solution, whereas for mere existence of a strong solution it suffices to assume the drift coefficient to be of at most linear growth and continuous in the law variable, see also \Cref{thm:strongSolution} below.\par
	Regularity properties of solutions to mean-field SDEs are investigated for example in \cite{Bauer_StrongSolutionsOfMFSDEs}, \cite{BuckdahnLiPengRainer_MFSDEandAssociatedPDE}, and \cite{crisan2018smoothing}. In \cite{BuckdahnLiPengRainer_MFSDEandAssociatedPDE} and \cite{crisan2018smoothing}, the authors derive Malliavin differentiability of solutions to mean-field SDE \eqref{eq:MFSDEGeneral} for regular coefficients $b$ and $\sigma$. Further, they examine in the case of regular coefficients differentiability of the solution with respect to the initial value. In their analysis they use the notion of Lions derivative which denotes the derivative with respect to a measure. We derive in \cite{Bauer_StrongSolutionsOfMFSDEs} Malliavin differentiability, Sobolev differentiability in the initial data, and Hölder continuity in time and initial data for the one-dimensional mean-field SDE \eqref{eq:MFSDEAdditive} but for drift coefficients that are merely Lipschitz continuous in the law variable and admit a decomposition \eqref{eq:decomp}. In particular, we prove Sobolev differentiability in the initial data without using the notion of Lions derivative. Lastly, we show that the expectation functional $\Ebb[(\Phi(X_T^x)]$ is Sobolev differentiable with respect to $x$, where $X^x$ is the unique strong solution of mean-field SDE \eqref{eq:MFSDEAdditive} and $\Phi: \Rbb \to \Rbb$ satisfies merely some integrability condition. Further, we derive a Bismut-Elworthy-Li type formula for the derivative $\nabla_x \Ebb[(\Phi(X_T^x)]$.\footnote{Here, $\nabla_x$ denotes the Jacobian with respect to the variable $x$.}\par

	The main objective of this paper is to extend the results obtained in \cite{Bauer_StrongSolutionsOfMFSDEs} to the multi-dimensional case. More precisely, at first we show existence of a strong solution for drift coefficients $b$ that are merely measurable, of at most linear growth, and continuous in the law variable. Here, we proceed as in \cite{Bauer_StrongSolutionsOfMFSDEs} to show first existence of a weak solution by applying Girsanov's theorem and Schauder's fixed point theorem, and then resort to existence results of SDE's to guarantee the existence of a strong solution. Under the additional assumption that $b$ admits a modulus of continuity in the law variable pathwise uniqueness of the solution is derived. If the drift coefficient $b$ is bounded and continuous in the law variable, we further show that the strong solution of the multi-dimensional mean-field SDE \eqref{eq:MFSDEAdditive} is Malliavin differentiable. Finally, for $b$ being merely bounded and Lipschitz continuous in the law variable, Sobolev differentiability in the initial data and Hölder continuity in time and intitial data as well as a Bismut-Elworthy-Li type formula are derived. \par	
		The main difference compared to the one-dimensional case in \cite{Bauer_StrongSolutionsOfMFSDEs} in the courses of the proofs of Sobolev differentiability, Hölder continuity, and the Bismut-Elworthy-Li formula is that there does not exist a representation of the Malliavin derivative by means of integration with respect to local time. Instead, we derive in a first step for regular drift coefficients $b$ the relation 
	\begin{align*}
		\nabla_x X_t^x = D_sX_t^x \nabla_x X_s^x + \int_s^t D_r X_t^x \nabla_x b(r, y, \Pbb_{X_r^x})\big\vert_{y= X_r^x} dr, ~0 \leq s \leq t \leq T,
	\end{align*}
		where $(D_sX_t^x)_{0\leq s \leq t \leq T}$ is the Malliavin derivative and $(\nabla_x X_t^x)_{0\leq t \leq T}$ the Sobolev derivative of the strong solution $X^x$ of mean-field SDE \eqref{eq:MFSDEAdditive}. Afterwards we use this relation to derive the pursued regularity properties for irregular drift coefficients $b$ by applying an approximational approach. \par	

	The paper is structured as follows. In \Cref{sec:assumptions} we give the definitions of the assumptions applied on the drift function $b$. \Cref{sec:solution} contains the main result on existence of a pathwisely unique solution. Afterwards, we discuss the properties of Malliavin and Sobolev differentiability as well as Hölder continuity in \Cref{sec:Malliavin,sec:Sobolev,sec:Hölder}, respectively. The paper is closed by deriving a Bismut-Elworthy-Li type formula in \Cref{sec:Bismut}.

\section{Notation and Assumptions} \label{sec:assumptions}
	Subsequently we list some of the most frequently used notations.
\begin{itemize}
\item $\lbrace e_k \rbrace_{1 \leq k \leq d}$ is the standard basis of $\Rbb^d$ consisting of the unit vectors.
\item $\Ccal_b^{1,1}(\Rbb^d)$ is the space of continuously differentiable functions $f:\Rbb^d \to \Rbb^d$ with bounded and Lipschitz continuous partial derivatives.
\item $\Ccal_0^\infty(\Rbb^d)$ denotes the space of smooth functions with compact support. 
\item $L^\infty \left([0,T], \Ccal_b^{1,L}\left(\Rbb^d \times \Pcal_1\left(\Rbb^d \right) \right) \right)$ is the space of functions $f:[0,T] \times \Rbb^d \times \Pcal_1 \left( \Rbb^d \right) \to \Rbb^d$ such that 
\begin{itemize}
	\item $t \mapsto f(t,y,\mu)$ is bounded uniformly in $y\in \Rbb$ and $\mu \in \Pcal_1\left( \Rbb^d \right)$
	\item $\left( y \mapsto f(t,y,\mu)\right) \in \Ccal_b^{1,1}(\Rbb^d)$ uniformly in $t\in [0,T]$ and $\mu \in \Pcal_1\left( \Rbb^d \right)$
	\item $\mu \mapsto f(t,y,\mu)$ is Lipschitz continuous uniformly in $t\in [0,T]$ and $y \in \Rbb^d$.
\end{itemize}
\item $\delta_0$ denotes the Dirac measure in $0$.
\item $\Lip_1\left(\Rbb^d, \Rbb \right)$ denotes the set of functions $f: \Rbb^d \to \Rbb$ that are Lipschitz continuous with Lipschitz constant $1$.
\item The Kantorovich metric on the space $\Pcal_1(\Rbb^d)$ is defined by
	\begin{align*}
		\Kcal(\mu, \nu) := \sup_{h \in \Lip_1(\Rbb^d, \Rbb)} \left\vert \int_{\Rbb^d} h(y) (\mu - \nu)(dy) \right\vert, \quad \mu, \nu \in \Pcal_1 \left(\Rbb^d \right).
	\end{align*}
\item We write $E_1(\theta) \lesssim E_2(\theta)$ for two mathematical expressions $E_1(\theta),E_2(\theta)$ depending on some parameter $\theta$, if there exists a constant $C>0$ not depending on $\theta$ such that $E_1(\theta) \leq C E_2(\theta)$.
\item $\Vert \cdot \Vert_\infty$ sup norm over all variables
\item $\Vert \cdot \Vert$ is the euclidean norm
\item $\nabla_x$ is the Jacobian in the direction of the variable $x \Rbb^d$, $\nabla_k$ is the Jacobian in the direction of the $k$-th variable, $\partial_x$ is the (weak) partial derivative in the direction of the variable $x \in \Rbb$, $\partial_k$ is the (weak) partial derivative in the direction of $e_k$.
\item We define the weight function
	\begin{align}\label{eq:weightFunction}
		\omega_T(y) := \exp \left\lbrace - \frac{\Vert y\Vert^2}{4T} \right\rbrace, \quad y \in \Rbb^d,
	\end{align}
	and the weighted $L^2$-space $L^2(\Rbb^d; \omega_T)$ as the space of functions $f:\Rbb^d \to \Rbb^d$ such that 
	\begin{align*}
		\left( \int_{\Rbb^d} \Vert f(y) \Vert^2 \omega_T(y) dy \right)^\frac{1}{2} < \infty.
	\end{align*}
\end{itemize}

	In the following we give conditions on the drift function $${b: [0,T] \times \Rbb^d \times \Pcal_1(\Rbb^d) \to \Rbb^d}$$ that we use frequently throughout the paper. \bigskip
	
	We say that the function $b$ is of \emph{linear growth}, if there exists a constant $C>0$ such that for every $t\in [0,T]$, $y \in \Rbb^d$, and $\mu \in \Pcal_1(\Rbb^d)$
	\begin{align}\label{eq:linearGrowth}
		\Vert b(t,y, \mu) \Vert \leq C \left( 1 + \Vert y \Vert + \Kcal(\mu, \delta_0) \right).
	\end{align}
	
	The function $b$ is said to be \emph{continuous in the third variable} (uniformly with respect to the first and second variable), if for every $\mu \in \Pcal_1(\Rbb^d)$ and $\varepsilon >0$ there exists $\delta >0$ such that for all $\nu \in \Pcal_1(\Rbb^d)$ with $\Kcal(\mu, \nu) < \delta$, we have for all $t\in [0,T]$ and $y\in \Rbb^d$
	\begin{align}\label{eq:continuousThird}
		\Vert b(t,y, \mu) - b(t,y,\nu) \Vert < \varepsilon.
	\end{align}

	The drift coefficient $b$ admits a \emph{modulus of continuity} (in the third variable), if  there exists a continuous function $\theta: \Rbb_+ \to \Rbb_+$ with $\int_0^z (\theta(y))^{-1} dy = \infty$ for all $z \in \Rbb_+$ such that for every $t \in [0,T]$, $y \in \Rbb^d$, and $\mu, \nu \in \Pcal_1(\Rbb^d)$
	\begin{align}\label{eq:modulusOfContinuity}
		\Vert b(t,y,\mu) - b(t,y,\nu) \Vert^2 \leq \theta\left( \Kcal(\mu, \nu)^2 \right).
	\end{align}
	
	We say the drift coefficient $b$ is \emph{Lipschitz continuous in the third variable} (uniformly with respect to the first and second variable), if there exists a constant $C>0$ such that for all $t\in [0,T]$, $y \in \Rbb^d$, and $\mu, \nu \in \Pcal_1(\Rbb^d)$
	\begin{align}\label{eq:Lipschitz}
		\Vert b(t,y,\mu) - b(t,y,\nu) \Vert \leq C \Kcal(\mu, \nu).
	\end{align}

\section{Existence and Uniqueness of Solutions}\label{sec:solution}
	In this section we investigate under which of the assumptions specified in \Cref{sec:assumptions} on the drift coefficient $b$ mean-field SDE \eqref{eq:MFSDEAdditive} has a (strong) solution and moreover, in which case this solution is unique. Let us recall the definitions of weak and strong solutions as well as weak and pathwise uniqueness.
	
\begin{definition}[Weak Solution]
	A six-tuple ${(\Omega, \Fcal, \Fbb, \Pbb, B, X^x)}$ is called \emph{weak solution} of mean-field SDE \eqref{eq:MFSDEAdditive}, if
	\begin{enumerate}[(i)]
	\item $(\Omega, \Fcal, \Fbb, \Pbb)$ is a complete filtered probability space and $\Fbb = \lbrace \Fcal_t \rbrace_{t\in [0,T]}$ satisfies the usual conditions of right-continuity and completeness,
	\item $B=(B_t)_{t\in [0,T]}$ is $d$-dimensional $(\Fbb, \Pbb)$-Brownian motion,
	\item $X^x = (X^x_t)_{t\in [0,T]}$ is an a.s.~continuous, $\Fbb$-adapted, $\Rbb^d$-valued process which satisfies $\Pbb$-a.s. equation \eqref{eq:MFSDEAdditive}.
	\end{enumerate}
\end{definition}

\begin{definition}[Strong Solution]
	A \emph{strong solution} of mean-field SDE \eqref{eq:MFSDEAdditive} is a weak solution $(\Omega, \Fcal, \Fbb^B, \Pbb, B, X^x)$ where $\Fbb^B$ is the filtration generated by the Brownian motion $B$ and augmented with the $\Pbb$-null sets.
\end{definition}

\begin{remark}
	In the following we merely speak of $X^x$ as a weak and a strong solution of mean-field SDE \eqref{eq:MFSDEAdditive}, respectively, if there is no ambiguity concerning the stochastic basis $(\Omega, \Fcal, \Fbb, \Pbb, B)$.
\end{remark}

\begin{definition}[Uniqueness in Law]
	A weak solution $(\Omega, \Fcal, \Fbb, \Pbb, B, X^x)$ of mean-field SDE \eqref{eq:MFSDEAdditive} is said to be \emph{weakly unique} or \emph{unique in law}, if for any other weak solution $(\widetilde{\Omega}, \widetilde{\Fcal}, \widetilde{\Fbb}, \widetilde{\Pbb}, \widetilde{B}, Y^x)$ of \eqref{eq:MFSDEAdditive} with the same initial condition $X_0^x = Y_0^x$, it holds that
	\begin{align*}
		\Pbb_{X^x} = \widetilde{\Pbb}_{Y^x}.
	\end{align*}
\end{definition}

\begin{definition}[Pathwise Uniqueness]
	A weak solution $(\Omega, \Fcal, \Fbb, \Pbb, B, X^x)$ of mean-field SDE \eqref{eq:MFSDEAdditive} is said to be \emph{pathwisely unique}, if for any other weak solution $Y^x$ with respect to the same stochastic basis $(\Omega, \Fcal, \Fbb, \Pbb, B)$ with the same initial condition $X_0^x = Y_0^x$, it holds that
	\begin{align*}
		\Pbb\left( \forall t\geq 0: X_t^x = Y_t^x \right) = 1.
	\end{align*}
\end{definition}

\begin{remark}
	Since for strong solutions of mean-field SDE's of type \eqref{eq:MFSDEAdditive} the notions of pathwise uniqueness and uniqueness in law are equivalent (cf. \cite[Remark 2.11]{Bauer_StrongSolutionsOfMFSDEs}), we merely speak of a unique strong solution, if a strong solution is unique in any of the two senses.
\end{remark}

The following result provides sufficient conditions allowing for irregular drift coefficients $b$ such that mean-field SDE \eqref{eq:MFSDEAdditive} has a (unique) strong solution. Note that in \cite[Proposition 2]{MishuraVeretennikov_SolutionsOfMKV} a similar result on the existence of a strong solution of mean-field SDE \eqref{eq:MFSDEAdditive} is derived where the authors assume drift coefficients of at most linear growth that are continuous in the law variable with respect to the topology of weak convergence. Here, in contrast to \cite{MishuraVeretennikov_SolutionsOfMKV}, we assume continuity in the law variable merely with respect to the Kantorovich metric and provide a more direct alternative of proof that is not based on approximation arguments.

\begin{theorem}\label{thm:strongSolution}
	Suppose the drift coefficient $b:[0,T] \times \Rbb^d \times \Pcal_1(\Rbb^d) \to \Rbb^d$ is of at most linear growth \eqref{eq:linearGrowth} and continuous in the third variable \eqref{eq:continuousThird}. Then, mean-field SDE \eqref{eq:MFSDEAdditive} has a strong solution. \par
	If in addition $b$ is admitting a modulus of continuity \eqref{eq:modulusOfContinuity}, the solution is unique.
\end{theorem}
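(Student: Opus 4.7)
The plan is to follow the two-step Girsanov--Schauder approach suggested in the introduction: first produce a weak solution via Schauder's fixed-point theorem applied to a measure-flow-to-measure-flow map $\Phi$ built from Girsanov's theorem, and then invoke a classical strong-existence result for the \emph{frozen} ordinary SDE (with drift obtained by plugging the weak solution's marginals into the law argument) to promote the weak solution to a strong one. Uniqueness under the additional modulus-of-continuity assumption will be obtained by a Bihari-type comparison of two fixed points of $\Phi$ in the Kantorovich metric.

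For the existence part, I would fix a filtered probability space carrying a $d$-dimensional Brownian motion $B$, set $W_t := x + B_t$, and for each continuous measure flow $\mu : [0,T] \to \Pcal_1(\Rbb^d)$ define the Girsanov density
\[
Z_T^\mu := \exp\left( \int_0^T b(s, W_s, \mu_s)^\top dB_s - \tfrac{1}{2} \int_0^T \Vert b(s, W_s, \mu_s) \Vert^2 ds \right).
\]
Linear growth \eqref{eq:linearGrowth} combined with the Gaussian tail of $B$ yields Novikov's condition, so $Z^\mu$ is a true martingale and under $d\Qbb^\mu := Z_T^\mu d\Pbb$ the process $W$ solves the frozen SDE with drift $b(\cdot, \cdot, \mu_\cdot)$. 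Set $\Phi(\mu)_t := \Qbb^\mu \circ W_t^{-1}$, so that fixed points of $\Phi$ correspond to weak solutions of \eqref{eq:MFSDEAdditive}. Apply Schauder's theorem on a convex set of continuous flows with a uniform $\Kcal(\cdot, \delta_0)$-moment bound: invariance follows from the linear-growth moment estimate for $W$ under $\Qbb^\mu$, compactness from a Kolmogorov tightness estimate uniform in $\mu$, and continuity of $\Phi$ from assumption \eqref{eq:continuousThird} combined with stability of Girsanov densities in the measure argument. Given the fixed point $\mu^*$, the drift $\tilde b(t, y) := b(t, y, \mu^*_t)$ is measurable with linear growth, so by Veretennikov's theorem for multi-dimensional SDEs with additive noise (localized to handle linear rather than bounded growth) the ordinary SDE $dY_t = \tilde b(t, Y_t) dt + dB_t$, $Y_0 = x$, admits a pathwise unique strong solution $Y^x$. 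Uniqueness in law of the frozen SDE forces $\Pbb_{Y_t^x} = \mu_t^* = \Pbb_{X_t^x}$, so $Y^x$ itself solves \eqref{eq:MFSDEAdditive} and is $\Fbb^B$-adapted.

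For uniqueness, the remark preceding the theorem reduces the task to weak uniqueness; since the law of any weak solution is determined by its measure flow through Girsanov on the frozen SDE, it suffices to show that $\Phi$ has a unique fixed point. Given two fixed points $\mu, \nu$, Kantorovich--Rubinstein duality and $\Ebb[Z_t^\mu] = \Ebb[Z_t^\nu] = 1$ give
\[
\Kcal(\mu_t, \nu_t) = \sup_{h \in \Lip_1(\Rbb^d, \Rbb)} \left| \Ebb\left[ (Z_t^\mu - Z_t^\nu)\bigl(h(W_t) - h(x)\bigr) \right] \right| \leq \Ebb\left[ \Vert B_t \Vert \cdot |Z_t^\mu - Z_t^\nu| \right].
\]
Writing the linear SDE for $Z^\mu - Z^\nu$, applying It\^o's isometry, and splitting the integrand as
\[
Z_s^\mu b(s, W_s, \mu_s) - Z_s^\nu b(s, W_s, \nu_s) = Z_s^\mu \bigl(b(s, W_s, \mu_s) - b(s, W_s, \nu_s)\bigr) + (Z_s^\mu - Z_s^\nu) b(s, W_s, \nu_s),
\]
the modulus-of-continuity bound \eqref{eq:modulusOfContinuity} together with exponential-integrability moments of $Z^\mu, Z^\nu$ and a standard Gronwall step produce
\[
\Kcal(\mu_t, \nu_t)^2 \leq C \int_0^t \theta\bigl( \Kcal(\mu_s, \nu_s)^2 \bigr) ds,
\]
and Bihari's inequality together with $\int_0^z \theta(y)^{-1} dy = \infty$ forces $\Kcal(\mu_t, \nu_t) \equiv 0$.

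The hard part will be to close the Bihari-ready estimate above under only \emph{linear} growth of $b$: the term $(Z^\mu - Z^\nu) b(s, \cdot, \nu_s)$ in the It\^o decomposition is not directly absorbed by a bounded constant, so one has to invoke higher-moment (exponential-integrability) estimates on the Girsanov densities and run a Gronwall bootstrap that still leaves the right-hand side in the $\theta$-composed form demanded by Bihari. The same higher-moment control is what underlies the tightness/invariance estimates in the Schauder step, so the combination of linear growth in the spatial variable and only a modulus of continuity in the law variable is where the technical weight of the proof sits.
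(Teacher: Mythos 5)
Your proposal follows essentially the same route as the paper: weak existence via Girsanov plus a Schauder fixed point on measure flows (which the paper delegates to its one-dimensional predecessor), promotion to a strong solution by freezing the marginal law, invoking Veretennikov's result for the resulting ordinary SDE and using the moment bound to rule out explosion, and uniqueness by comparing the Girsanov densities of two fixed points in the Kantorovich metric and closing with Bihari's inequality. The one caveat is that Novikov's condition need not hold globally on $[0,T]$ for a linear-growth drift, so the true-martingale property of $Z^\mu$ has to be obtained by the standard Bene\v{s}-type patching over small subintervals rather than a single application of Novikov; this does not alter the structure of the argument.
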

\begin{proof}
	First note that identically to \cite[Theorem 2.3]{Bauer_StrongSolutionsOfMFSDEs} one can show that under the assumptions of linear growth \eqref{eq:linearGrowth} and continuity in the third variable \eqref{eq:continuousThird} on the drift coefficient $b$, mean-field SDE \eqref{eq:MFSDEAdditive} has a weak solution $(X_t^x)_{t\in[0,T]}$ for any finite time horizon $T>0$. In particular, $\Pbb_{X^x} \in \Ccal([0,T]; \Pcal_1(\Rbb^d))$ and due to \Cref{lem:boundDrift} for every $p\geq 1$
	\begin{align}\label{eq:LpBound}
		\EW{ \sup_{t\in [0,T]} \Vert X_t^x \Vert^p } < \infty.
	\end{align}
	In order to show the existence of a strong solution, consider the stochastic differential equation 
	\begin{align}\label{eq:auxSDE}
		dY_t^x = b^{\Pbb_{X}}\left(t,Y_t^x \right) dt + dB_t, \quad t\in [0,T], \quad Y_0^x = x \in \Rbb^d,
	\end{align}
	where $b^{\Pbb_{X}}(t,y) := b(t,y,\Pbb_{X_t^x})$ for all $t\in [0,T]$ and $y\in \Rbb^d$. Due to the work of Veretennikov \cite{veretennikov1981strong} it is well-known that SDE \eqref{eq:auxSDE} has a unique strong solution $(Y_t)_{t\in [0, \tau]}$ up to the time of explosion $\tau > 0$. Since $X^x$ is a weak solution of SDE \eqref{eq:auxSDE} on the interval $[0,T]$, both processes $X^x$ and $Y^x$ must coincide on the interval $[0,\tau]$, due to uniqueness of the solution $Y$ to SDE \eqref{eq:auxSDE}. But due to condition \eqref{eq:LpBound}, $X^x$ is almost surely finite on the interval $[0,T]$ and thus $Y^x$ is also almost surely finite on the interval $[0,T]$. Consequently, $Y^x$ is a strong solution of SDE \eqref{eq:auxSDE} on the interval $[0,T]$ which coincides pathwisely and in law with $X^x$. In particular, for all $t \in [0,T]$
	\begin{align*}
		\Pbb_{Y_t} = \Pbb_{X_t},
	\end{align*}
	and thus, SDE \eqref{eq:auxSDE} and mean-field SDE \eqref{eq:MFSDEAdditive} coincide and $Y^x$ is a strong solution of mean-field SDE \eqref{eq:MFSDEAdditive}. \par
	If in addition $b$ admits a modulus of continuity \eqref{eq:modulusOfContinuity}, it can be shown analogously to \cite[Theorem 2.7]{Bauer_StrongSolutionsOfMFSDEs} that the weak solution of mean-field equation \eqref{eq:MFSDEAdditive} is unique in law. This in fact yields a unique associated SDE \eqref{eq:auxSDE}. In addition with the uniqueness of the strong solution to SDE \eqref{eq:auxSDE}, this yields a unique strong solution of mean-field equation \eqref{eq:MFSDEAdditive}.
\end{proof}

\section{Regularity Properties}\label{sec:regularity}

\subsection{Malliavin Differentiability}\label{sec:Malliavin}
	Similar to the existence of a strong solution, the property of being Malliavin differentiable transfers directly from the solution $Y^x$ of SDE \eqref{eq:auxSDE} to the solution $X^x$ of mean-field SDE \eqref{eq:MFSDEAdditive}. Thus, we immediately get from \cite[Theorem 3.3]{MenoukeuMeyerBrandisNilssenProskeZhang_VariationalApproachToTheConstructionOfStrongSolutions} the following result.
	
\begin{theorem}\label{thm:Malliavin}
	Suppose the drift coefficient $b:[0,T] \times \Rbb^d \times \Pcal_1(\Rbb^d) \to \Rbb^d$ is continuous in the third variable \eqref{eq:continuousThird} and bounded. Then, the strong solution $(X_t^x)_{t \in [0,T]}$ of mean-field SDE \eqref{eq:MFSDEAdditive} is Malliavin differentiable.
\end{theorem}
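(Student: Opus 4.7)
The plan is to reduce the Malliavin differentiability of $X^x$ to the same property for the ``decoupled'' auxiliary SDE that appeared in the proof of \Cref{thm:strongSolution}, and then to invoke the standard Malliavin differentiability result for SDEs with additive noise and bounded Borel drift.

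First I would recall from the proof of \Cref{thm:strongSolution} that the strong solution $X^x$ of mean-field SDE \eqref{eq:MFSDEAdditive} coincides pathwise with the strong solution $Y^x$ of the SDE
\begin{equation*}
dY_t^x = b^{\Pbb_{X}}(t, Y_t^x)\, dt + dB_t, \quad t \in [0,T], \quad Y_0^x = x,
\end{equation*}
where the deterministic measure flow $(\Pbb_{X_t^x})_{t \in [0,T]}$ is treated as a given input and $b^{\Pbb_X}(t,y) := b(t, y, \Pbb_{X_t^x})$. Consequently, Malliavin differentiability of $X^x$ is equivalent to that of $Y^x$, and it suffices to establish the latter. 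In order to apply a standard Malliavin differentiability result to $Y^x$, one needs to verify that $b^{\Pbb_X}: [0,T] \times \Rbb^d \to \Rbb^d$ is Borel-measurable and uniformly bounded. Uniform boundedness is immediate from the boundedness of $b$, while joint Borel measurability in $(t,y)$ follows from the joint measurability of $b$ together with the continuity of $b$ in the third variable \eqref{eq:continuousThird} and the continuity of $t \mapsto \Pbb_{X_t^x}$ in the Kantorovich metric (noted during the proof of \Cref{thm:strongSolution}).

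The final step is to apply \cite[Theorem 3.3]{MenoukeuMeyerBrandisNilssenProskeZhang_VariationalApproachToTheConstructionOfStrongSolutions} directly to $Y^x$: that result asserts that the unique strong solution of a $d$-dimensional Brownian SDE with additive noise and bounded Borel drift lies in the Malliavin-Sobolev space $\Dbb^{1,2}$ for every $t \in [0,T]$. Since $X_t^x = Y_t^x$ almost surely, Malliavin differentiability transfers to $X^x$. The substantive content of the argument is entirely contained in the reduction to the decoupled SDE already carried out in \Cref{thm:strongSolution}; the only delicate point here is to check that the hypotheses of the Menoukeu-Pamen et al.~result are satisfied under the standing assumptions, which as indicated above is straightforward. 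The main (but mild) obstacle is thus bookkeeping to confirm that the continuity-in-law hypothesis on $b$, although not explicitly used by the Malliavin result, is what ensures that $b^{\Pbb_X}$ remains a well-behaved Borel function after freezing the measure flow.
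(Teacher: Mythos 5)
Your proposal is correct and follows exactly the paper's route: the paper likewise transfers Malliavin differentiability from the decoupled SDE \eqref{eq:auxSDE} with frozen measure flow to $X^x$ and then cites \cite[Theorem 3.3]{MenoukeuMeyerBrandisNilssenProskeZhang_VariationalApproachToTheConstructionOfStrongSolutions} for bounded measurable drift. Your additional bookkeeping on the Borel measurability of $b^{\Pbb_X}$ is a reasonable elaboration of a step the paper leaves implicit.
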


\subsection{Sobolev Differentiability}\label{sec:Sobolev}
	In this section we consider the unique strong solution of mean-field SDE \eqref{eq:MFSDEAdditive} as a function in the initial value $x$, i.e. for every $t \in [0,T]$ we consider the function $x \mapsto X_t^x$. More precisely, we are interested in the existence of the first variation process $(\nabla_x X_t^x)_{t\in[0,T]}$ in a weak (Sobolev) sense. Let us first recall the definition of the Sobolev space $W^{1,2}(U)$ and then state the main result of this section.

\begin{definition}
	Let $U \subset \Rbb^d$ be an open and bounded subset. The Sobolev space $W^{1,2}(U)$ is defined as the set of functions $u:\Rbb^d \to \Rbb^d$, $u\in L^2(U)$, such that its weak derivative belongs to $L^2(U)$. Furthermore, the Sobolev space is endowed with the norm
\begin{align*}
	\Vert u \Vert_{W^{1,2}(U)} = \Vert u \Vert_{L^2(U)} + \sum_{k=1}^d \Vert \partial_k u \Vert_{L^2(U)}.
\end{align*}
We say a stochastic process $X$ is Sobolev differentiable in $U$, if for all $t\in[0,T]$, $X_t^{\cdot}$ belongs $\Pbb$-a.s. to $W^{1,2}(U)$.
\end{definition}
 
\begin{theorem}\label{thm:Sobolev}
	Suppose the drift coefficient $b:[0,T] \times \Rbb^d \times \Pcal_1(\Rbb^d) \to \Rbb^d$ is Lipschitz continuous in the third variable \eqref{eq:Lipschitz} and bounded. Let $(X_t^x)_{t\in[0,T]}$ be the unique strong solution of mean-field SDE \eqref{eq:MFSDEAdditive} and $U\subset \Rbb^d$ be an open and bounded subset. Then, for every $t\in [0,T]$
	\begin{align*}
		\left(x \mapsto X_t^x \right) \in L^2\left(\Omega, W^{1,2}(U)\right).
	\end{align*}
\end{theorem}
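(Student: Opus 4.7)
The plan is to proceed by approximation, replacing the local-time techniques used in the one-dimensional setting of \cite{Bauer_StrongSolutionsOfMFSDEs} with the identity
\begin{align*}
    \nabla_x X_t^{x} = D_0 X_t^{x} + \int_0^t D_r X_t^{x}\, \nabla_x b\bigl(r,y,\Pbb_{X_r^{x}}\bigr)\big\vert_{y=X_r^{x}}\, dr
\end{align*}
anticipated in the introduction. First I would mollify $b$ spatially to obtain $b^n \in L^\infty([0,T],\Ccal_b^{1,L}(\Rbb^d\times\Pcal_1(\Rbb^d)))$, say $b^n(t,y,\mu):=(b(t,\cdot,\mu)*\rho_n)(y)$ with $\rho_n$ a standard mollifier on $\Rbb^d$. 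The $b^n$ inherit a uniform-in-$n$ sup bound from $b$ and a uniform-in-$n$ Lipschitz constant in the measure variable from \eqref{eq:Lipschitz}. By \Cref{thm:strongSolution}, each mean-field SDE with drift $b^n$ admits a unique strong solution $X^{n,x}$, and the classical mean-field calculus from \cite{BuckdahnLiPengRainer_MFSDEandAssociatedPDE} gives that $x\mapsto X_t^{n,x}$ is $\Ccal^1$ and satisfies the identity above with $b$ replaced by $b^n$.

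Next I would derive a uniform-in-$n$ bound $\sup_n \EW{\Vert \nabla_x X_t^{n,x}\Vert^2}\le C$ pointwise in $x\in U$. The Lipschitz condition \eqref{eq:Lipschitz} bounds the factor $\nabla_x b^n(r,y,\Pbb_{X_r^{n,x}})\vert_{y=X_r^{n,x}}$ by the Lipschitz constant uniformly in $n$ (using $\Kcal(\Pbb_{X_r^{n,x+h}},\Pbb_{X_r^{n,x}})\lesssim \EW{\Vert X_r^{n,x+h}-X_r^{n,x}\Vert}$), while $D_r X_t^{n,x}$ admits moment estimates depending only on $\Vert b\Vert_\infty$ and $T$ through the Girsanov-based Malliavin machinery of \cite{MenoukeuMeyerBrandisNilssenProskeZhang_VariationalApproachToTheConstructionOfStrongSolutions} applied to the associated SDE \eqref{eq:auxSDE}. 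Combining these by Minkowski's inequality in the identity delivers the desired bound, and together with a standard estimate for $\EW{\Vert X_t^{n,x}\Vert^2}$ yields boundedness of $\{x\mapsto X_t^{n,x}\}_n$ in the reflexive Hilbert space $L^2(\Omega,W^{1,2}(U))$.

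Extracting a weakly convergent subsequence $x\mapsto X_t^{n_k,x}\rightharpoonup \xi_t$ in $L^2(\Omega,W^{1,2}(U))$, I would identify $\xi_t$ with $X_t^x$ by showing pointwise $L^2(\Omega)$-convergence $X_t^{n,x}\to X_t^x$. This follows from a Girsanov/Gronwall stability argument at the level of SDE \eqref{eq:auxSDE}, exploiting that $b^n\to b$ locally in $L^2$ and that Lipschitz continuity in $\mu$ forces a contraction of the law map in the Kantorovich metric, so that the limiting MFSDE is uniquely solved by $X^x$ in view of \Cref{thm:strongSolution}.

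The hardest step is the uniform-in-$n$ control of $D_r X_t^{n,x}$: a naive Gronwall argument on the linear equation $d(D_s X_t^{n,x})=\nabla_y b^n(t,X_t^{n,x},\Pbb_{X_t^{n,x}})\,D_s X_t^{n,x}\,dt$ is useless because $\Vert \nabla_y b^n\Vert_\infty$ blows up as $n\to\infty$. One must therefore replace it by a Girsanov change of measure and the variational representation of solutions with bounded drift, so that the resulting estimate depends only on $\Vert b\Vert_\infty$ and $T$ and not on any spatial regularity of $b^n$.
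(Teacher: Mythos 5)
Your overall strategy --- mollify $b$, represent $\nabla_x X_t^{n,x}$ through the Malliavin derivative via the identity $\nabla_x X_t^{n,x} = D_0X_t^{n,x} + \int_0^t D_r X_t^{n,x}\,\nabla_x b_n(r,y,\Pbb_{X_r^{n,x}})\vert_{y=X_r^{n,x}}\,dr$, obtain uniform bounds in $L^2(\Omega,W^{1,2}(U))$, extract a weak limit and identify it with $X^x$ --- is exactly the paper's. But two steps are not carried by the arguments you give. First, the bound you extract from \eqref{eq:Lipschitz} for $\nabla_x b_n(r,y,\Pbb_{X_r^{n,x}})\vert_{y=X_r^{n,x}}$ is of the form $\Kcal(\Pbb_{X_r^{n,x+h}},\Pbb_{X_r^{n,x}})/\Vert h\Vert \lesssim \esssup_{x}\EW{\Vert\nabla_x X_r^{n,x}\Vert}$, i.e.\ it is controlled by the very quantity you are estimating; Minkowski's inequality alone does not close this loop. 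You must run the estimate over $\esssup_{x\in\overline{\conv(K)}}$ and apply a Gr\"onwall-type inequality (the paper uses Jones' generalization, after checking Borel measurability and integrability of $t\mapsto\esssup_x\EpO{\nabla_x X_t^{n,x}}{p}$). Relatedly, spatial mollification does not produce differentiability of $b_n$ in the measure variable, so the $\Ccal^1$ regularity of $x\mapsto X_t^{n,x}$ from \cite{BuckdahnLiPengRainer_MFSDEandAssociatedPDE} is not available; one only gets Lipschitz continuity and hence a.e.\ (weak) differentiability, which is precisely why the $\esssup$ formulation is forced.

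Second, and more seriously, your identification of the weak limit rests on the pointwise convergence $X_t^{n,x}\to X_t^x$ in $L^2(\Omega)$, and the proposed ``Girsanov/Gr\"onwall stability argument'' does not deliver it: a Gr\"onwall estimate on $\EW{\Vert X_t^{n,x}-X_t^x\Vert^2}$ requires spatial Lipschitz continuity of the drift, which is exactly what is absent here --- you correctly reject the analogous naive argument for $D_sX_t^{n,x}$, but the same objection applies to this step. Girsanov yields convergence of laws and of expectations of path functionals, not strong (or even weak) convergence of the random variables on the fixed probability space, which is what is needed to test against $\mathbbm{1}_A\phi$ with arbitrary $A\in\Fcal$. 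The paper instead invokes the relative compactness of $\lbrace X_t^{n,x}\rbrace_{n\geq 1}$ in $L^2(\Omega,\Fcal_t)$ from \cite[Corollary 3.6]{MenoukeuMeyerBrandisNilssenProskeZhang_VariationalApproachToTheConstructionOfStrongSolutions}, chooses the subsequence independently of $t$, and identifies the limit as the unique strong solution of the SDE driven by the limiting law, which then coincides with $X^x$ by uniqueness. Without this compactness input (or an equivalent substitute) your limit-identification step does not go through.
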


The remaining part of this subsection is devoted to the proof of \Cref{thm:Sobolev}. We start by showing that the result does hold for regular drift coefficients $b$. Subsequently, we define a sequence $\lbrace b_n \rbrace_{n\geq 1}$ of regular functions that approximate the irregular drift coefficient $b$ from \Cref{thm:Sobolev} and prove that the strong solutions $\lbrace X^{n,x} \rbrace_{n \geq 1}$ to the corresponding mean-field SDEs converge strongly in $L^2(\Omega)$ to the solution $X^x$ of \eqref{eq:MFSDEAdditive}. Concluding we get by showing that $\lbrace X^{n,x} \rbrace_{n \geq 1}$ is weakly relatively compact in the space $L^2\left(\Omega, W^{1,2}(U)\right)$ that $X^x$ is Sobolev differentiable as a function in the initial value $x$.

\begin{proposition}\label{prop:SobolevReg}
	Let the drift coefficient $b\in L^\infty \left([0,T], \Ccal_b^{1,L}\left(\Rbb^d \times \Pcal_1\left(\Rbb^d\right) \right) \right)$ and let $(X_t^x)_{t\in[0,T]}$ be the unique strong solution of mean-field SDE \eqref{eq:MFSDEAdditive}. Then, for all $t\in [0,T]$ the map $x \mapsto X_t^x$ is a.s. Lipschitz continuous and consequently weakly and almost everywhere differentiable.
\end{proposition}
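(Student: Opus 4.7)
The plan is to establish the pointwise bound $\sup_{t \in [0,T]} \|X_t^x - X_t^{x'}\| \leq K \|x - x'\|$ almost surely for a deterministic constant $K$ via a two-step Gronwall argument, then pass to an almost sure Lipschitz modification of $x \mapsto X_t^x$ using a countable dense set, and finally invoke Rademacher's theorem to obtain both weak and almost everywhere classical differentiability on any open bounded $U \subset \Rbb^d$.

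Fixing initial values $x, x' \in \Rbb^d$ and subtracting the mean-field SDEs,
\begin{align*}
X_t^x - X_t^{x'} = (x - x') + \int_0^t \left[ b(s, X_s^x, \Pbb_{X_s^x}) - b(s, X_s^{x'}, \Pbb_{X_s^{x'}}) \right] ds,
\end{align*}
the hypothesis $b \in L^\infty([0,T], \Ccal_b^{1,L}(\Rbb^d \times \Pcal_1(\Rbb^d)))$ bounds the integrand by $L(\|X_s^x - X_s^{x'}\| + \Kcal(\Pbb_{X_s^x}, \Pbb_{X_s^{x'}}))$. Using the elementary coupling inequality $\Kcal(\Pbb_{X_s^x}, \Pbb_{X_s^{x'}}) \leq \EW{\|X_s^x - X_s^{x'}\|}$, taking expectations and applying Gronwall's lemma first yields the \emph{deterministic} bound $\EW{\|X_t^x - X_t^{x'}\|} \lesssim \|x - x'\|$ uniformly in $t \in [0,T]$. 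Substituting this bound back into the pathwise inequality decouples the Kantorovich term from the unknown norm, so a second, pathwise Gronwall application delivers the claimed almost sure estimate.

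To upgrade the pair-by-pair bound to an almost sure Lipschitz property of $x \mapsto X_t^x$, I would fix a countable dense set $D \subset \Rbb^d$ and intersect the relevant null sets across $D \times D$, yielding a full-measure event on which $x \in D \mapsto X_t^x$ is $K$-Lipschitz. Its unique Lipschitz extension to all of $\Rbb^d$ agrees almost surely with the original solution at each fixed $x$ by $L^1$-continuity in the initial datum, and Rademacher's theorem then supplies almost everywhere classical differentiability together with coincidence with the weak derivative on any bounded open $U$.

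The key technical point is the two-step Gronwall device: one cannot apply a single pathwise Gronwall directly, since $\Kcal(\Pbb_{X_s^x}, \Pbb_{X_s^{x'}})$ is a deterministic functional of the joint law rather than a path-dependent quantity, and must first be controlled in expectation before the pathwise iteration can close. The subsequent promotion to a Lipschitz modification and the invocation of Rademacher's theorem are then routine.
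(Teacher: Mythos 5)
Your proposal is correct and follows essentially the same route as the paper, which simply defers to the one-dimensional analogue in \cite[Proposition 3.5]{Bauer_StrongSolutionsOfMFSDEs}: there, too, one subtracts the two equations, bounds $\Kcal(\Pbb_{X_s^x},\Pbb_{X_s^{x'}})$ by $\Ebb[\Vert X_s^x - X_s^{x'}\Vert]$ via the Kantorovich--Rubinstein coupling inequality, closes a Gr\"onwall estimate in expectation first, and then reinserts the resulting deterministic bound to run a pathwise Gr\"onwall, before extending from a countable dense set and applying Rademacher's theorem. Your remark that the two-step Gr\"onwall device is needed because the Kantorovich term is a functional of the law rather than of the path is exactly the right observation.
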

\begin{proof}
	The proof is equivalent to the proof of \cite[Proposition 3.5]{Bauer_StrongSolutionsOfMFSDEs}.
\end{proof}

\begin{corollary}
	 The map $x \mapsto b(s,y,\Pbb_{X_s^x})$ is Lipschitz continuous for all $t\in[0,T]$ and $y \in \Rbb^d$ under the assumptions of \Cref{prop:SobolevReg} and thus weakly and almost everywhere differentiable. Moreover, for every $0 \leq s < t \leq T$
	\begin{align}\label{eq:RepFVMD}
		\nabla_x X_t^x = D_sX_t^x \nabla_x X_s^x + \int_s^t D_r X_t^x \nabla_x b(r, y, \Pbb_{X_r^x})\big\vert_{y= X_r^x} dr.
	\end{align}
\end{corollary}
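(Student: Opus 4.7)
The plan is to carry out three steps. For the Lipschitz continuity claim, the Kantorovich--Rubinstein duality yields
$$
\Kcal\bigl(\Pbb_{X_s^x}, \Pbb_{X_s^{x'}}\bigr) \leq \Ebb\bigl\Vert X_s^x - X_s^{x'} \bigr\Vert,
$$
since the joint law of $(X_s^x, X_s^{x'})$ provides an explicit coupling. Combining this with the a.s.\ Lipschitz continuity of $x \mapsto X_s^x$ from \Cref{prop:SobolevReg} (which upgrades to an $L^1$-Lipschitz bound via the uniform $L^p$-estimate \eqref{eq:LpBound} and dominated convergence) and the Lipschitz assumption \eqref{eq:Lipschitz} on $b$ in the third variable, I conclude that $x \mapsto b(s,y,\Pbb_{X_s^x})$ is Lipschitz, uniformly in $y\in\Rbb^d$. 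Rademacher's theorem then furnishes both the weak and the a.e.\ classical derivative $\nabla_x b(s,y,\Pbb_{X_s^x})$.

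For the formula \eqref{eq:RepFVMD}, the strategy is to derive two linear integral equations sharing the same homogeneous coefficient and relate them by variation of constants. Differentiating the identity $X_t^x = x + \int_0^t b(r,X_r^x,\Pbb_{X_r^x})\,dr + B_t$ in $x$ and invoking the Sobolev chain rule (justified by $b(r,\cdot,\mu) \in \Ccal_b^{1,1}$ together with the Lipschitz continuity just established in the measure argument), I obtain
$$
\nabla_x X_t^x = I + \int_0^t \nabla_y b(r,X_r^x,\Pbb_{X_r^x})\, \nabla_x X_r^x\, dr + \int_0^t \nabla_x b(r,y,\Pbb_{X_r^x})\big\vert_{y = X_r^x}\, dr.
$$
Simultaneously, since $\Pbb_{X_r^x}$ is deterministic and hence has vanishing Malliavin derivative, standard Malliavin calculus applied to \eqref{eq:MFSDEAdditive} gives
$$
D_s X_t^x = I + \int_s^t \nabla_y b(r,X_r^x,\Pbb_{X_r^x})\, D_s X_r^x\, dr, \qquad 0 \leq s \leq t \leq T.
$$

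Writing $A(r) := \nabla_y b(r, X_r^x, \Pbb_{X_r^x})$, the Malliavin derivative $r \mapsto D_r X_t^x$ on $[s,t]$ is the fundamental matrix solution to the homogeneous linear equation driven by $A$, while $r \mapsto \nabla_x X_r^x$ solves the inhomogeneous equation with source $r \mapsto \nabla_x b(r,y,\Pbb_{X_r^x})\big\vert_{y=X_r^x}$; the variation-of-constants formula applied between times $s$ and $t$ then yields precisely \eqref{eq:RepFVMD}. The main obstacle is justifying the chain rule when $x \mapsto X_r^x$ is only Lipschitz, not classically $\Ccal^1$. This is handled by passing to the full-measure set of initial values on which Rademacher's theorem provides a classical derivative, combined with the uniform boundedness of $A(r)$ and of the source term (both guaranteed by $b \in L^\infty([0,T],\Ccal_b^{1,L})$), which ensures that all the linear ODEs involved are well-posed and that dominated convergence legitimises the differentiation under the time integral.
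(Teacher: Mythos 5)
Your proposal is correct and follows essentially the same route as the paper: both establish Lipschitz continuity of $x \mapsto b(s,y,\Pbb_{X_s^x})$ from the coupling bound $\Kcal(\Pbb_{X_s^x},\Pbb_{X_s^{x'}}) \leq \Ebb\Vert X_s^x - X_s^{x'}\Vert$ together with \eqref{eq:Lipschitz}, identify $\nabla_x X^x$ as the solution of the inhomogeneous linear ODE and $D_sX^x$ as the fundamental solution of the associated homogeneous ODE, and obtain \eqref{eq:RepFVMD} by variation of constants. If anything, your formulation via the fundamental matrix solution is slightly more careful than the paper's explicit matrix-exponential formula, which as written presumes commutativity of the matrices $\nabla_2 b(r,X_r^x,\Pbb_{X_r^x})$ across $r$, whereas the variation-of-constants identity you invoke holds regardless.
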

\begin{proof}
	Similar to the proof of \cite[Proposition 3.5]{Bauer_StrongSolutionsOfMFSDEs} it can be shown that $x \mapsto b(s,y,\Pbb_{X_s^x})$ is Lipschitz continuous for all $t\in[0,T]$ and $y \in \Rbb^d$. Furthermore, consider the linear affine ODE 
	\begin{align}\label{eq:ODEFV}
		Z_t = I_d + \int_0^t \nabla_2 b\left(s, X_s^x, \Pbb_{X_s^x}\right) Z_s + \nabla_x b\left(s, y, \Pbb_{X_s^x}\right)\big\vert_{y= X_s^x} ds.
	\end{align}
	First note that $\nabla_x X_t^x$ is a solution of ODE \eqref{eq:ODEFV}. Moreover, by assumption $\Vert \nabla_2 b \Vert_\infty \leq C_1 < \infty$ for some constant $C_1>0$ and since $x \mapsto X_s^x$ is Lipschitz continuous for all $s \in [0,T]$ we get
	\begin{align*}
		\left\Vert \nabla_x b\left(s, y, \Pbb_{X_s^x}\right)\big\vert_{y= X_s^x} \right\Vert &\leq \sum_{k=1}^d \lim_{x_0^{(k)} \to x^{(k)}} \left\Vert \frac{b\left(s, X_s^x, \Pbb_{X_s^x} \right) - b\left(s, X_s^x, \Pbb_{X_s^{\overline{x_0}^{(k)}}} \right)}{x^{(k)} - x_0^{(k)}} \right\Vert \\
		&\lesssim \sum_{k=1}^d \lim_{x_0^{(k)} \to x^{(k)}} \frac{\Kcal \left( \Pbb_{X_s^x}, \Pbb_{X_s^{\overline{x_0}^{(k)}}} \right)}{\left\vert x^{(k)} - x_0^{(k)} \right\vert} \lesssim 1,
	\end{align*}
	where $\overline{x_0}^{(k)} = x + \langle x_0 - x, e_k \rangle.$ Therefore, $\Vert  \nabla_x b(s, y, \Pbb_{X_s^x})\vert_{y= X_s^x} \Vert_\infty \leq C_2 < \infty$ for some constant $C_2>0$ and consequently, ODE \eqref{eq:ODEFV} has the unique solution $\nabla_x X_t^x$. On the other hand, the Malliavin derivative $D_sX_t^x$, $0\leq s < t \leq T$, is the unique solution to the homogeneous ODE
	\begin{align*}
		D_s X_t^x = I_d + \int_s^t \nabla_2 b\left(r, X_r^x, \Pbb_{X_r^x}\right) D_s X_r^x dr.
	\end{align*}
	Consequently, we get that the Malliavin derivative has the explicit representation
	\begin{align*}
		D_s X_t^x = \exp\left\lbrace \int_s^t \nabla_2 b\left(r, X_r^x, \Pbb_{X_r^x}\right) dr \right\rbrace,
	\end{align*}
	and the first variation process has the representation
	\begin{align*}
		\nabla_x X_t^x = D_0 X_t^x \left( I_d + \int_0^t \left( D_0 X_r \right)^{-1} \nabla_x b\left(r, y, \Pbb_{X_r^x}\right)\big\vert_{y= X_r^x} dr \right).
	\end{align*}
	Thus, we get
	\begin{align*}
		D_sX_t^x \nabla_x X_s^x &= D_0 X_t^x \left( I_d + \int_0^s \left( D_0 X_r \right)^{-1} \nabla_x b\left(r, y, \Pbb_{X_r^x}\right)\big\vert_{y= X_r^x} dr \right) \\
		&= D_0 X_t^x + \int_0^s D_r X_t \nabla_x b\left(r, y, \Pbb_{X_r^x}\right)\big\vert_{y= X_r^x} dr \\
		&= \nabla_x X_t^x - \int_s^t D_r X_t \nabla_x b\left(r, y, \Pbb_{X_r^x}\right)\big\vert_{y= X_r^x} dr.
	\end{align*}
	Rearranging yields equation \eqref{eq:RepFVMD}.
\end{proof}

	Now consider a general drift coefficient $b$ which fulfills the assumptions of \Cref{thm:Sobolev}, namely Lipschitz continuity in the third variable \eqref{eq:Lipschitz} and boundedness, and let $X^x$ be the corresponding unique strong solution of mean-field SDE \eqref{eq:MFSDEAdditive}. Due to standard approximation arguments there exists a sequence of approximating drift coefficients
	\begin{align}\label{eq:approxDrift}
		b_n \in L^\infty\left([0,T], \Ccal_b^{1,L}\left(( \Rbb^d \times \Pcal_1\left(\Rbb^d\right)\right)\right), \quad n\geq 1,
	\end{align}
	with $\sup_{n\geq 1} \Vert b_n \Vert_\infty \leq C < \infty$ such that $b_n \to b$ pointwise in every $\mu$ and a.e. in $(t,y)$ with respect to the Lebesgue measure. We denote $b_0 := b$ and assume that the drift coefficients $b_n$ are Lipschitz continuous in the third variable \eqref{eq:Lipschitz} uniformly in $n\geq 0$. We define the corresponding mean-field SDEs
	\begin{align}\label{eq:approxSDE}
		dX_t^{n,x} = b_n \left(t, X_t^{n,x}, \Pbb_{X_t^{n,x}} \right)dt + dB_t, \quad t\in [0,T], \quad X_0^{n,x} = x \in \Rbb^d,
	\end{align}
	which admit unique Malliavin differentiable strong solutions due to \Cref{thm:strongSolution} and \Cref{thm:Malliavin}. Moreover, the solutions $\lbrace X^{n,x} \rbrace_{n\geq 1}$ are Sobolev differentiable in the initial condition $x$ by \Cref{prop:SobolevReg}. Subsequently, we show that $(X_t^{n,x})_{t\in [0,T]}$ converges to $(X_t^x)_{t\in[0,T]}$ in $L^2(\Omega, \Fcal_t)$ as $n\to \infty$.

\begin{proposition}\label{prop:L2Convergence}
	Suppose the drift coefficient $b:[0,T] \times \Rbb^d \times \Pcal_1(\Rbb^d) \to \Rbb^d$ is Lipschitz continuous in the third variable \eqref{eq:Lipschitz} and bounded. Let $(X_t^x)_{t\in[0,T]}$ be the unique strong solution of mean-field SDE \eqref{eq:MFSDEAdditive}. Furthermore, $\lbrace b_n \rbrace_{n\geq 1}$ is the approximating sequence as defined in \eqref{eq:approxDrift} and $(X_t^{n,x})_{t\in[0,T]}$, $n\geq 1$, the corresponding unique strong solutions of \eqref{eq:approxSDE}. Then, there exists a subsequence $\lbrace n_k \rbrace_{k\geq 1} \subset \Nbb$ such that
	\begin{align*}
		X_t^{n_k,x} \xrightarrow[k\to \infty]{} X_t^x, \quad t\in [0,T],
	\end{align*}
	strongly in $L^2(\Omega, \Fcal_t)$.
\end{proposition}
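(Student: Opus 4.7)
My strategy is to proceed in three stages: tightness of the approximations on path space, identification of every subsequential limit with $X^x$ via weak uniqueness, and an upgrade from weak convergence to $L^2$-convergence on the original basis using the Gy\"ongy--Krylov principle together with pathwise uniqueness of the underlying frozen-law SDE. The hypothesis $\sup_n\Vert b_n\Vert_\infty \le C$ immediately yields $\sup_n \Ebb\bigl[\sup_{t\in[0,T]}\Vert X_t^{n,x}\Vert^p\bigr] < \infty$ for every $p\ge 1$ together with the pathwise estimate $\Vert X_t^{n,x} - X_s^{n,x}\Vert \le C|t-s| + \Vert B_t - B_s\Vert$. Kolmogorov--Chentsov then gives tightness of $\{\Pbb_{X^{n,x}}\}_n$ on $\Ccal([0,T];\Rbb^d)$, and the uniform moment control upgrades weak marginal convergence to convergence in the Kantorovich metric $\Kcal$. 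Passing to an arbitrary subsequence (still denoted $n_k$) and applying Skorohod's representation theorem, I work on an auxiliary probability space on which $X^{n_k,x} \to \widetilde X$ almost surely uniformly on $[0,T]$ together with the respective driving Brownian motions.

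For the identification step, writing $\mu_s^{n_k}:=\Pbb_{X_s^{n_k,x}}$ and $\widetilde\mu_s:=\Pbb_{\widetilde X_s}$, the decomposition
\begin{align*}
b_{n_k}(s, X_s^{n_k,x}, \mu_s^{n_k}) - b(s, \widetilde X_s, \widetilde\mu_s)
&= \bigl[b_{n_k}(s, X_s^{n_k,x}, \mu_s^{n_k}) - b_{n_k}(s, X_s^{n_k,x}, \widetilde\mu_s)\bigr] \\
&\quad+ \bigl[b_{n_k}(s, X_s^{n_k,x}, \widetilde\mu_s) - b(s, X_s^{n_k,x}, \widetilde\mu_s)\bigr] \\
&\quad+ \bigl[b(s, X_s^{n_k,x}, \widetilde\mu_s) - b(s, \widetilde X_s, \widetilde\mu_s)\bigr]
\end{align*}
isolates three vanishing contributions: the first is dominated by the uniform Lipschitz-in-law constant times $\Kcal(\mu_s^{n_k}, \widetilde\mu_s)\to 0$; the second exploits $b_n \to b$ pointwise in $\mu$ and Lebesgue-a.e. in $(s,y)$, uniform $L^\infty$-boundedness, and a uniform Gaussian-type density bound for $X_s^{n_k,x}$ supplied by Girsanov applied to the frozen-law SDE, so dominated convergence applies; the third is treated similarly via the same density bound and the almost sure convergence $X_s^{n_k,x}\to \widetilde X_s$ from the Skorohod coupling. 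Since Lipschitz-in-law trivially implies the modulus of continuity \eqref{eq:modulusOfContinuity}, \Cref{thm:strongSolution} provides uniqueness in law of \eqref{eq:MFSDEAdditive}, so $\widetilde X \stackrel{d}{=} X^x$; as the choice of subsequence was arbitrary, the whole sequence $X^{n,x}$ converges in law to $X^x$ on $\Ccal([0,T];\Rbb^d)$ and $\Kcal(\mu_t^{n}, \Pbb_{X_t^x}) \to 0$ for every $t$.

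For the final upgrade to strong convergence on the original basis $(\Omega, \Fcal, \Fbb^B, \Pbb, B)$, set $\widetilde b(t,y):= b(t, y, \Pbb_{X_t^x})$. The limiting SDE $dY_t = \widetilde b(t, Y_t)\,dt + dB_t$, $Y_0 = x$, has bounded measurable drift and therefore admits a pathwisely unique strong solution by Veretennikov \cite{veretennikov1981strong}, which by the argument in the proof of \Cref{thm:strongSolution} coincides with $X^x$. For any two subsequences, joint tightness of $(X^{n_k,x}, X^{m_k,x}, B)$ followed by Skorohod produces a joint limit $(\widetilde X, \widehat X, \widehat B)$ in which both components are limits of strong solutions whose drifts converge (by the previous step) to $\widetilde b$; both $\widetilde X$ and $\widehat X$ therefore solve the limiting SDE driven by $\widehat B$, and pathwise uniqueness forces $\widetilde X = \widehat X$ almost surely. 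The Gy\"ongy--Krylov principle then yields $X_t^{n,x} \to X_t^x$ in probability, and the uniform $L^p$-bounds of the first stage combined with Vitali's theorem strengthen this to strong $L^2(\Omega, \Fcal_t)$-convergence, from which the claimed subsequence is obtained (in fact, convergence holds along the full sequence). The main obstacle sits in the identification step: because $b$ is merely measurable in the spatial variable, the drift integral cannot be passed to the limit through continuity, and the argument is closed only by invoking the Girsanov-derived density bound on $X_s^{n_k,x}$ together with the assumed pointwise-in-$\mu$, a.e.-in-$(s,y)$ convergence of $b_n$ to $b$.
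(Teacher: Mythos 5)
Your argument is correct in outline but follows a genuinely different route from the paper. The paper's proof never leaves $L^2(\Omega)$: it invokes the Malliavin-calculus compactness criterion of Menoukeu et al.\ (their Corollary 3.6, which rests on the uniform Malliavin-derivative bounds guaranteed here by \Cref{thm:Malliavin}) to extract a subsequence converging \emph{strongly} in $L^2(\Omega,\Fcal_t)$ to some $Y_t$ directly, chooses the subsequence independently of $t$, and then identifies the limit by showing that $X_t^{n_k,x}$ also converges weakly to the unique strong solution of the SDE with frozen law $\Pbb_{Y_t}$, whence $Y_t \stackrel{d}{=} \Xo_t^x$ and the frozen-law SDE collapses back to \eqref{eq:MFSDEAdditive}. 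You instead run the classical weak-convergence machinery: tightness on $\Ccal([0,T];\Rbb^d)$, Skorohod representation, identification of the limit law, and then Gy\"ongy--Krylov plus pathwise uniqueness of the frozen-law SDE (Veretennikov) to return to the original basis, with Vitali upgrading convergence in probability to $L^2$. What your approach buys is independence from the Wiener-space compactness criterion (and hence from \Cref{thm:Malliavin}); what it costs is that the entire difficulty migrates into the identification step, and there your treatment of the third term $b(s,X_s^{n_k,x},\widetilde\mu_s)-b(s,\widetilde X_s,\widetilde\mu_s)$ is the one thin spot: almost sure convergence of the arguments gives nothing for merely measurable $b$, so you must additionally mollify $b$ in the spatial variable and use the Girsanov density bound \emph{both} for $X_s^{n_k,x}$ and for the limit $\widetilde X_s$ (the latter obtained from weak lower semicontinuity of the uniform $L^2$-density bound) to control the mollification error, with the continuity of the mollified drift handling the middle piece. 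This is standard and closes the gap, but it should be stated explicitly; as written, "treated similarly via the same density bound" conceals the step that actually does the work. With that addition your proof is complete, and, like the paper's, in fact yields convergence of the full sequence.
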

\begin{proof}
	In \cite[Corollary 3.6]{MenoukeuMeyerBrandisNilssenProskeZhang_VariationalApproachToTheConstructionOfStrongSolutions} it is shown in the case of SDEs that for every $t\in[0,T]$ the sequence $\lbrace X^{n,x} \rbrace_{n\geq 1}$ is relatively compact in $L^2(\Omega, \Fcal_t)$. Due to \Cref{thm:Malliavin} the proof therein can be extended to the case of mean-field SDEs under the assumptions of \Cref{prop:L2Convergence}. Thus, for every $t\in [0,T]$ we can find a subsequence $\lbrace n_k(t) \rbrace_{k\geq 1}$ such that $X_t^{n_k(t),x}$ converges to some $Y_t$ strongly in $L^2(\Omega, \Fcal_t)$. Following the same ideas as in the proof of \cite[Proposition 3.8]{Bauer_StrongSolutionsOfMFSDEs} it can be shown that the subsequence $\lbrace n_k(t) \rbrace_{k\geq 1}$ can be chosen independent of $t\in [0,T]$. Moreover, the proof of \cite[Proposition 3.9]{Bauer_StrongSolutionsOfMFSDEs} can be readily extended to the multi-dimensional case which yields that $\lbrace X_t^{n_k,x} \rbrace_{k\geq 1}$ converges weakly in $L^2(\Omega, \Fcal_t)$ to the unique strong solution $\Xo_t^x$ of the SDE
	\begin{align}\label{eq:XoSDE}
		d\Xo_t^x = b\left(t,\Xo_t^x, \Pbb_{Y_t} \right) dt + dB_t,~ t\in [0,T],~\Xo_0^x = x \in \Rbb^d.
	\end{align}
	Due to uniqueness of the limit we get that $Y_t^x \stackrel{d}{=} \Xo_t^x$ for all $t \in [0,T]$. Consequently, SDE \eqref{eq:XoSDE} is identical to mean-field SDE \eqref{eq:MFSDEAdditive} and thus $\lbrace X_t^{n_k, x} \rbrace_{k\geq 1}$ converges strongly in $L^2(\Omega, \Fcal_t)$ to $X_t = Y_t = \Xo_t$ for every $t\in [0,T]$.
\end{proof}

\begin{remark}
	For the sake of readability we assume subsequently without loss of generality that for every $t\in [0,T]$ the whole sequence $\lbrace X_t^{n,x} \rbrace$ converges strongly in $L^2(\Omega, \Fcal_t)$ to $X_t^x$.
\end{remark}

\begin{lemma}\label{lem:boundFV}
	Let $(X_t^{n,x})_{t\in[0,T]}$, $n\geq 1$, be the unique strong solutions of mean-field SDEs \eqref{eq:approxSDE}. Then, for any compact subset $K\subset \Rbb^d$ and $p\geq 2$,
	\begin{align*}
		\sup_{n\geq 1} \sup_{t\in[0,T]} \esssup_{x\in K} \EpO{\nabla_x X_t^{n,x}}{p} \leq C,
	\end{align*}
	for some constant $C>0$.
\end{lemma}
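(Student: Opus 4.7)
The plan is to apply the integral identity \eqref{eq:RepFVMD} to each regular approximation $X^{n,x}$ and then close an integral inequality for $\Ebb\bigl[\|\nabla_x X_t^{n,x}\|^p\bigr]^{1/p}$ via Gronwall. First I would specialize \eqref{eq:RepFVMD} at $s = 0$ (using $\nabla_x X_0^{n,x} = I_d$) to obtain
\begin{align*}
\nabla_x X_t^{n,x} = D_0 X_t^{n,x} + \int_0^t D_r X_t^{n,x}\, \nabla_x b_n\bigl(r, y, \Pbb_{X_r^{n,x}}\bigr)\big\vert_{y=X_r^{n,x}}\, dr.
\end{align*}
The whole point of using this formula instead of the ODE \eqref{eq:ODEFV} for $\nabla_x X_t^{n,x}$ is that the explicit exponential formula $D_r X_t^{n,x} = \exp\{\int_r^t \nabla_2 b_n(s, X_s^{n,x}, \Pbb_{X_s^{n,x}})\, ds\}$ is useless here, since $\|\nabla_2 b_n\|_\infty$ blows up with $n$; the Malliavin derivative must instead be controlled by variational methods that see only $\|b_n\|_\infty$.

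The main obstacle, and the step I would carry out with the most care, is the uniform-in-$n$ Malliavin bound. Invoking the variational construction underlying \Cref{thm:Malliavin} (whose proof rests on a Girsanov transform producing a Dol\'eans-Dade exponential whose moments depend only on $\|b_n\|_\infty$, $T$, and $p$), I would obtain a constant $M_p > 0$ such that
\begin{align*}
\sup_{n \geq 1}\; \sup_{0 \leq r \leq t \leq T}\; \sup_{x \in \Rbb^d}\; \Ebb\bigl[\|D_r X_t^{n,x}\|^p\bigr]^{1/p} \leq M_p,
\end{align*}
using the standing hypothesis $\sup_n \|b_n\|_\infty \leq C$. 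This is the step that fails for any naive approach and is the crux of the proof.

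For the drift-derivative factor, the uniform Lipschitz constant $L$ of $b_n$ in the third variable, the coupling inequality $\Kcal(\Pbb_X, \Pbb_Y) \leq \Ebb[\|X - Y\|]$, and the a.s.\ Lipschitz continuity of $x \mapsto X_r^{n,x}$ from \Cref{prop:SobolevReg}, combined as in the proof of the corollary to \Cref{prop:SobolevReg}, would give the deterministic pointwise bound
\begin{align*}
\sup_{y \in \Rbb^d}\, \bigl\|\nabla_x b_n\bigl(r, y, \Pbb_{X_r^{n,x}}\bigr)\bigr\| \leq L\, \Ebb\bigl[\|\nabla_x X_r^{n,x}\|\bigr];
\end{align*}
the uniformity in $y$ lets me absorb the evaluation $y = X_r^{n,x}$ into a deterministic factor. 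Substituting the Malliavin bound and this estimate into the representation above, applying Minkowski's inequality, and then Jensen to replace $\Ebb[\|\nabla_x X_r^{n,x}\|]$ by $\Ebb[\|\nabla_x X_r^{n,x}\|^p]^{1/p}$ would produce the closed integral inequality
\begin{align*}
\Ebb\bigl[\|\nabla_x X_t^{n,x}\|^p\bigr]^{1/p} \leq M_p + L M_p \int_0^t \Ebb\bigl[\|\nabla_x X_r^{n,x}\|^p\bigr]^{1/p}\, dr,
\end{align*}
to which Gronwall's lemma applies, delivering $\Ebb[\|\nabla_x X_t^{n,x}\|^p]^{1/p} \leq M_p e^{L M_p T}$. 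This bound is pointwise in $x \in \Rbb^d$ and uniform in $n \geq 1$ and $t \in [0,T]$, so raising to the $p$-th power and taking the essential supremum over $x \in K$ yields the claim.
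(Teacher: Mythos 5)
Your overall strategy is the paper's: representation \eqref{eq:RepFVMD} at $s=0$, a uniform-in-$n$ $L^p$ bound on the Malliavin derivative coming from the variational construction (the paper cites \cite[Lemma 3.5]{MenoukeuMeyerBrandisNilssenProskeZhang_VariationalApproachToTheConstructionOfStrongSolutions}, which indeed depends only on $\Vert b_n\Vert_\infty$, $T$, $p$), control of $\nabla_x b_n$ through the uniform Lipschitz constant in the measure variable, and Gr\"onwall. However, there is a genuine gap in the step where you close the integral inequality \emph{pointwise in $x$}. The bound
\begin{align*}
\sup_{y\in\Rbb^d}\bigl\Vert \nabla_x b_n\bigl(r,y,\Pbb_{X_r^{n,x}}\bigr)\bigr\Vert \leq L\,\Ebb\bigl[\Vert\nabla_x X_r^{n,x}\Vert\bigr]
\end{align*}
is not justified at the fixed point $x$. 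The map $x\mapsto b_n(r,y,\Pbb_{X_r^{n,x}})$ is only Lipschitz, hence differentiable for a.e.\ $x$, and where the derivative exists it is controlled by the difference quotients
$L\,\Ebb[\Vert X_r^{n,x+he_k}-X_r^{n,x}\Vert]/|h| \leq L\int_0^1 \Ebb[\Vert\nabla_x X_r^{n,x+\lambda h e_k}\Vert]\,d\lambda$,
i.e.\ by an average of $\Ebb[\Vert\nabla_x X_r^{n,\cdot}\Vert]$ over the segment joining $x$ and $x+he_k$, not by its value at $x$. Since $z\mapsto\Ebb[\Vert\nabla_x X_r^{n,z}\Vert]$ is a priori only measurable (not continuous), letting $h\to 0$ yields at best $L\,\esssup_{z}\Ebb[\Vert\nabla_x X_r^{n,z}\Vert]$ over a neighbourhood of $x$. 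Consequently your closed inequality and the conclusion ``pointwise in $x\in\Rbb^d$'' are too strong as stated.

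The repair is exactly what the paper does: take $\esssup_{x\in\overline{\conv(K)}}$ of both sides \emph{before} applying Gr\"onwall, so that the segments $[x,x+he_k]$ stay inside the set over which the essential supremum is taken, and the inequality closes for the function $t\mapsto\esssup_{x\in\overline{\conv(K)}}\Ebb[\Vert\nabla_x X_t^{n,x}\Vert^p]^{1/p}$. Because this function of $t$ is only known to be Borel measurable and integrable, the classical Gr\"onwall lemma (which presupposes continuity or at least an a priori finite continuous majorant) must be replaced by Jones' generalization \cite[Lemma 5]{Jones_FundamentalInequalities}, as the paper notes. With these two modifications your argument coincides with the paper's proof; the remaining ingredients (uniform Malliavin bound, Minkowski, Jensen to pass from the $L^1$ to the $L^p$ norm) are sound.
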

\begin{proof}
	In the course of this proof we make use of representation \eqref{eq:RepFVMD}, namely
	\begin{align*}
		\nabla_x X_t^{n,x} = D_0X_t^{n,x} + \int_0^t D_r X_t^{n,x} \nabla_x b(r, y, \Pbb_{X_r^{n,x}})\big\vert_{y= X_r^{n,x}} dr, \quad n\geq 1.
	\end{align*}
	First note that due to \cite[Lemma 3.5]{MenoukeuMeyerBrandisNilssenProskeZhang_VariationalApproachToTheConstructionOfStrongSolutions} and the uniform boundedness of $b_n$ in $n\geq 1$, we have that
	\begin{align}\label{eq:boundMD}
		\sup_{n\geq 1} \sup_{s,t \in [0,T]} \sup_{x\in K} \EpO{D_s X_t^{n,x}}{p} \leq C_1 < \infty,
	\end{align}
	for some constant $C_1>0$. Moreover, we get that
	\begin{align*}
		&\EpO{\int_0^t \nabla_x b_n(r, y, \Pbb_{X_r^{n,x}})\big\vert_{y= X_r^{n,x}} dr}{2p} \\
		&\quad \lesssim \sum_{k=1}^d \sum_{j=1}^d \Ebb \left[ \left( \int_0^t \left\vert \partial_{x^{(k)}} b_n^{(j)}(r, y, \Pbb_{X_r^{n,x}}) \right\vert_{y= X_r^{n,x}} dr \right)^{2p} \right].
	\end{align*}
	Following the proof of \cite[Lemma 3.10]{Bauer_StrongSolutionsOfMFSDEs}, we get due to the assumption $( \mu \mapsto b_n(t,y, \mu) ) \in \Lip(\Pcal_1(\Rbb^d))$ for every $t\in [0,T]$ and $y\in \Rbb^d$ uniformly in $n\geq 1$ that
	\begin{align*}
		\Ebb \left[ \left( \int_0^t \left\vert \partial_{x^{(k)}} b_n^{(j)}(r, y, \Pbb_{X_r^{n,x}}) \right\vert_{y= X_r^{n,x}} dr \right)^{2p} \right] \lesssim 1 + \int_0^t \esssup_{x\in \overline{\conv(K)}} \Ebb\left[ \left\vert \partial_{x^{(k)}} X_r^{n,(j),x} \right\vert \right] dr.
	\end{align*}
	All things considered we get that
	\begin{align*}
		\esssup_{x\in \overline{\conv(K)}} \Ep{\nabla_x X_t^{n,x}}{p} \lesssim 1 + \int_0^t \esssup_{x\in \overline{\conv(K)}} \Ep{\nabla_x X_r^{n,x}}{p} dr.
	\end{align*}
	Here, $\overline{\conv(K)}$ is the closure of the convex hull of the set $K$. Noting that $t \mapsto \esssup_{x\in \overline{\conv(K)}} \EpO{\nabla_x X_t^{n,x}}{p}$ is integrable over $[0,T]$ and Borel measurable, cf. \cite[Lemma 3.10]{Bauer_StrongSolutionsOfMFSDEs} for more details, allows for the application of Jones' generalization of Grönwall's inequality \cite[Lemma 5]{Jones_FundamentalInequalities}, and thus we get that
	\begin{align*}
		\esssup_{x\in K} \Ep{\nabla_x X_t^{n,x}}{p} \leq \esssup_{x\in \overline{\conv(K)}} \Ep{\nabla_x X_t^{n,x}}{p} < \infty.
	\end{align*}
\end{proof}

\begin{proof}[Proof of \Cref{thm:Sobolev}]
	The proof is equivalent to the proof of \cite[Theorem 3.3]{Bauer_StrongSolutionsOfMFSDEs} but for the sake of completeness we present it in the following. Consider the unique strong solutions $\lbrace X^{n,x} \rbrace_{n\geq 1}$ of mean-field SDEs \eqref{eq:approxSDE} and the unique strong solution $X^x$ of mean-field SDE \eqref{eq:MFSDEAdditive}. Subsequently, we show that  $\left\lbrace X^{n,x} \right\rbrace_{n\geq 1}$ is weakly relatively compact in $L^2(\Omega,W^{1,2}(U))$ and then identify the weak limit $Y:=\lim_{k\to\infty} X^{n_k}$ in $L^2(\Omega,W^{1,2}(U))$ with $X^x$, where $\lbrace n_k \rbrace_{k\geq 1}$ is a suitable subsequence.\par
	Note first that due to \Cref{lem:boundDrift} and \Cref{lem:boundFV}
	\begin{align*}
		\sup_{n\geq 1} \sup_{t\in [0,T]} \Ebb\left[ \Vert X_t^{n,x} \Vert_{W^{1,2}(U)}^2 \right] < \infty,
	\end{align*}
	and therefore, $\lbrace X_t^{n,x} \rbrace_{n\geq 1}$ is weakly relatively compact in $L^2(\Omega,W^{1,2}(U))$, see e.g. \cite[Theorem 10.44]{Leoni}. Thus, there exists a subsequence $\lbrace n_k \rbrace_{k\geq 0}$, such that $X_t^{n_k,x}$ converges weakly to some $Y_t \in L^2(\Omega, W^{1,2}(U))$ as $k\to \infty$. 
	Define for every $t\in [0,T]$
	\begin{align*}
		\left\langle X_t^n, \phi \right\rangle := \int_U X_t^{n,x} \phi(x) dx,
	\end{align*}
	 for some arbitrary test function $\phi \in \Ccal_0^{\infty}(U)$ and denote by $\phi'$ its first derivative.
	Then we get by \Cref{lem:boundDrift} that for all measurable sets $A \in \Fcal$ and $t\in[0,T]$
\begin{align*}
	\Ebb \left[ \mathbbm{1}_A \langle X_t^n -X_t, \phi' \rangle \right] \leq \Vert \phi' \Vert_{L^2(U)} \vert U \vert^{\frac{1}{2}} \sup_{x\in\overline{U}}\Ebb \left[ \mathbbm{1}_A \Vert X_t^{n,x}-X_t^x \Vert^2 \right]^{\frac{1}{2}} < \infty,
\end{align*}
where $\overline{U}$ is the closure of $U$. Hence, we get by \Cref{prop:L2Convergence} that $$\lim_{n\to\infty} \Ebb \left[ \mathbbm{1}_A \langle X_t^{n}-X_t, \phi' \rangle \right] = 0,$$ and thus,
\begin{small}
\begin{align*}
	\Ebb[\mathbbm{1}_A \langle X_t, \phi'\rangle] = \lim_{k\to\infty} \Ebb[\mathbbm{1}_A \langle X_t^{n_k}, \phi' \rangle] = - \lim_{k\to\infty} \Ebb\left[\mathbbm{1}_A \left\langle \nabla_x X_t^{n_k},\phi \right\rangle\right] = - \Ebb\left[\mathbbm{1}_A \left\langle \nabla_x Y_t,\phi \right\rangle\right].
\end{align*}
\end{small}
Consequently,
\begin{align}\label{eq:multiWeakDerivativeExists}
	\Pbb\text{-a.s.} \quad \left\langle X_t, \phi' \right\rangle = - \left\langle \nabla_x  Y_t, \phi \right\rangle.
\end{align}
It is left to show as in \cite[Theorem 3.4]{MeyerBrandisBanosDuedahlProske_ComputingDeltas} that there exists a measurable set $\Omega_0 \subset \Omega$ with full measure such that $(x \mapsto X_t^x)$ has a weak derivative on the subset $\Omega_0$. In order to show this we choose a sequence $\lbrace \phi_n \rbrace_{n\geq 1} \subset \Ccal_0^{\infty}(\Rbb)$ which is dense in $W^{1,2}(U)$ and a measurable subset $\Omega_n \subset \Omega$ with full measure such that \eqref{eq:multiWeakDerivativeExists} if fulfilled on $\Omega_n$ where $\phi$ is replaced by $\phi_n$. Then $\Omega_0 := \bigcap_{n\geq 1} \Omega_n$ is a full measure set such that $(x \mapsto X_t^x)$ has a weak derivative on it.
\end{proof}

Closing the part on Sobolev differentiability we consider the function $x \mapsto b\left(t,y,\Pbb_{X_t^x}\right)$ and show that it is weakly differentiable. In \Cref{sec:Bismut} the weak derivative $\nabla_x b\left(t,y,\Pbb_{X_t^x}\right)$ is used in the Bismut-Elworthy-Li formula. Further, we give a remark on the connection to the Lions derivative.

\begin{proposition}\label{prop:driftInSobolev}
	Suppose the drift coefficient $b:[0,T] \times \Rbb^d \times \Pcal_1(\Rbb^d) \to \Rbb^d$ is Lipschitz continuous in the third variable \eqref{eq:Lipschitz} and bounded. Let $(X_t^x)_{t \in [0,T]}$ be the unique strong solution of mean-field SDE \eqref{eq:MFSDEAdditive} and $U\subset \Rbb^d$ be an open and bounded subset. Then for every $1<p<\infty$, $t\in[0,T]$, and $y\in \Rbb^d$,
\begin{align*}
	\left(x \mapsto b\left(t,y,\Pbb_{X_t^x}\right) \right) \in W^{1,p}(U).
\end{align*}
\end{proposition}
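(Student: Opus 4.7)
The plan is to reduce the claim to the statement that $f(x) := b(t,y,\Pbb_{X_t^x})$ is both bounded and Lipschitz continuous on $U$. Boundedness is immediate from the boundedness of $b$, and Lipschitz continuity together with Rademacher's theorem places $f$ in $W^{1,\infty}(U) \subset W^{1,p}(U)$ for every $1 < p < \infty$, yielding the claim.

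To prove Lipschitz continuity of $f$, I would combine the Lipschitz hypothesis \eqref{eq:Lipschitz} in the third variable with the Kantorovich--Rubinstein duality: realizing $X_t^x$ and $X_t^{x'}$ on the same probability space,
\begin{align*}
	\Vert f(x) - f(x') \Vert \lesssim \Kcal(\Pbb_{X_t^x}, \Pbb_{X_t^{x'}}) \leq \Eabs{X_t^x - X_t^{x'}}.
\end{align*}
The problem thus reduces to establishing the flow estimate $\Eabs{X_t^x - X_t^{x'}} \lesssim \Vert x - x' \Vert$ for $x, x' \in U$.

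To prove this flow estimate I would approximate the irregular drift $b$ by the regular sequence $\{b_n\}_{n \geq 1}$ of \eqref{eq:approxDrift}. For each $n$, \Cref{prop:SobolevReg} gives that the flow $x \mapsto X_t^{n,x}$ of \eqref{eq:approxSDE} is a.s.\ Lipschitz, so the fundamental theorem of calculus along the segment from $x'$ to $x$ yields
\begin{align*}
	X_t^{n,x} - X_t^{n,x'} = \int_0^1 \nabla_x X_t^{n, x' + s(x-x')} (x - x') \, ds.
\end{align*}
Taking norms, using Fubini, and invoking \Cref{lem:boundFV}'s uniform-in-$n$ bound on $\Eabs{\nabla_x X_t^{n,z}}$ for $z$ in a compact set containing the segment $\{x' + s(x-x') : s \in [0,1]\}$ will deliver $\Eabs{X_t^{n,x} - X_t^{n,x'}} \lesssim \Vert x - x' \Vert$ with a constant independent of $n$. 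Finally, \Cref{prop:L2Convergence} transports this bound to the limit $X^x$ via $L^2(\Omega, \Fcal_t)$-convergence, completing the proof of the flow estimate.

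The main technical nuance is to reconcile the pointwise integration along the one-dimensional (hence Lebesgue-null) segment with the essential supremum supplied by \Cref{lem:boundFV}. For the regularized $b_n$ this is resolved by observing that standard SDE-flow theory makes $z \mapsto \nabla_x X_t^{n,z}$ continuous as an element of $L^p(\Omega)$, so that $z \mapsto \Eabs{\nabla_x X_t^{n,z}}$ is continuous and its essential supremum coincides with the pointwise supremum on compact sets. Once this is settled the remaining estimates are routine.
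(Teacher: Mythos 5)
Your argument is correct in substance but takes a genuinely different route from the paper. The paper never establishes Lipschitz continuity of $x \mapsto b(t,y,\Pbb_{X_t^x})$; instead it stays at the level of the approximations: combining \Cref{lem:boundDrift} with the gradient bound from the proof of \Cref{lem:boundFV} it gets $\sup_{n}\Vert b_n(t,y,\Pbb_{X_t^{n,\cdot}})\Vert_{W^{1,p}(U)}<\infty$, extracts a weakly convergent subsequence in $W^{1,p}(U)$, and identifies the weak limit with $b(t,y,\Pbb_{X_t^{\cdot}})$ by testing against $\phi'$ and using the uniform convergence \eqref{eq:convergenceDrift}. That softer compactness argument has the side benefit of exhibiting $\nabla_x b$ as a weak limit of the $\nabla_x b_n$, which is what is actually used later (boundedness \eqref{eq:boundDb} and the convergence of $\nabla_x b_n$ in the proof of \Cref{thm:Bismut}). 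Your route --- Kantorovich--Rubinstein duality plus a uniform-in-$n$ flow estimate $\Ebb\left[\Vert X_t^{n,x}-X_t^{n,x'}\Vert\right]\lesssim \Vert x-x'\Vert$ transported to the limit by \Cref{prop:L2Convergence} --- is more elementary and yields the strictly stronger conclusion $f\in W^{1,\infty}(U)$.

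One step needs more care than you give it. The fundamental theorem of calculus along the segment from $x'$ to $x$, paired with the \emph{essential} supremum of \Cref{lem:boundFV}, is precisely the null-set problem you flag, but your proposed fix (continuity of $z\mapsto \nabla_x X_t^{n,z}$ in $L^p(\Omega)$ by ``standard SDE-flow theory'') is not available off the shelf here: \Cref{prop:SobolevReg} only provides a.s.\ Lipschitz continuity and weak differentiability of the flow, and the dependence of $b_n$ on $\Pbb_{X_t^{n,x}}$ puts the equation outside the classical flow results you are implicitly citing. A safer patch is to note that the FTC identity with the weak gradient holds for Lebesgue-a.e.\ pair $(x,x')\in U\times U$, which gives the estimate for a.e.\ pair with the constant of \Cref{lem:boundFV}; since $(x,x')\mapsto \Ebb\left[\Vert X_t^{n,x}-X_t^{n,x'}\Vert\right]$ is continuous (a.s.\ Lipschitz flow together with the domination supplied by \Cref{lem:boundDrift}), the estimate then extends to all pairs. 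With that repair your proof closes.
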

\begin{proof}
	Using the proof of \Cref{lem:boundFV} the result follows equivalently to \cite[Proposition 3.11]{Bauer_StrongSolutionsOfMFSDEs}. Nevertheless for completeness we give the proof here. \par
	Consider the approximating sequence $\lbrace b_n \rbrace_{n\geq 1}$ of the drift function $b$ as defined in \eqref{eq:approxDrift} and let $(X_t^{n,x})_{t\in[0,T]}$, $n\geq 1$, be the corresponding unique strong solutions of mean-field SDEs \eqref{eq:approxSDE}. For the sake of readability we denote $b_n(x) := b_n\left(t,y,\Pbb_{X_t^{n,x}}\right)$ for every $n\geq 0$. First note that $\lbrace b_n \rbrace_{n\geq 1}$ is weakly relatively compact in $W^{1,p}(U)$, since by \Cref{lem:boundDrift} and the proof of \Cref{lem:boundFV}
	\begin{align*}
		\sup_{n\geq 1} \Vert b_n \Vert_{W^{1,p}(U)} < \infty,
	\end{align*}
	and thus the sequence is weakly relatively compact by \cite[Theorem 10.44]{Leoni}. Thus, there exists a subsequence $\lbrace n_k \rbrace_{k\geq 1}$ and $g\in W^{1,p}(U)$ such that $b_{n_k}$ converges weakly to $g$ as $k\to\infty$.\par
	Let $\phi \in \Ccal_0^{\infty}(U)$ be an arbitrary test-function with first derivative $\phi'$. Define
	\begin{align*}
		\left\langle b_n, \phi \right\rangle := \int_U b_n(x) \phi(x) dx.
	\end{align*}
	We get due to \Cref{lem:boundDrift} that
	\begin{align*}
		\langle b_n - b , \phi' \rangle \leq \Vert \phi' \Vert_{L^p(U)} \vert U \vert^{\frac{1}{p}} \sup_{x\in\overline{U}} \Vert b_n(x) - b(x) \Vert < \infty.
	\end{align*}
	Here, $\overline{U}$ is the closure of $U$. Further, by \Cref{prop:L2Convergence}
	\begin{align}\label{eq:convergenceDrift}
		&\left\Vert b_n\left(t,y,\Pbb_{X_t^{n,x}} \right) - b\left(t,y,\Pbb_{X_t^x} \right) \right\Vert \\
		&\quad \leq \left\Vert b_n\left(t,y,\Pbb_{X_t^{n,x}} \right) - b_n\left(t,y,\Pbb_{X_t^x} \right) \right\Vert + \left\Vert b_n\left(t,y,\Pbb_{X_t^x} \right) - b\left(t,y,\Pbb_{X_t^x} \right) \right\Vert \notag \\
		&\quad \leq C \Kcal\left(\Pbb_{X_t^{n,x}}, \Pbb_{X_t^x} \right) + \left\Vert b_n\left(t,y,\Pbb_{X_t^x} \right) - b\left(t,y,\Pbb_{X_t^x} \right) \right\Vert \xrightarrow[n\to\infty]{} 0, \notag
	\end{align}
	which yields $\lim_{n\to\infty} \langle b_n-b,\phi' \rangle = 0.$ Therefore,
\begin{align*}
	\langle b, \phi'\rangle = \lim_{k\to\infty} \langle b_{n_k}, \phi' \rangle = - \lim_{k\to\infty} \left\langle b_{n_k}',\phi \right\rangle = - \left\langle g',\phi \right\rangle,
\end{align*}
where $b_{n_k}'$ and $g'$ are the first variation processes of $b_{n_k}$ and $g$, respectively.	
\end{proof}

\begin{remark}
	Note that by the proof of \Cref{prop:driftInSobolev} the process $\nabla_x b$ is bounded, i.e.
	\begin{align}\label{eq:boundDb}
		\Vert \nabla_x b \Vert_\infty \leq C < \infty,
	\end{align}
	for some constant $C> 0$.
\end{remark}

\begin{remark}
	Due to \Cref{lem:boundDrift} the law of the unique strong solution $X^x$ of mean-field SDE \eqref{eq:MFSDEAdditive} is in the space $\Pcal_2(\Rbb^d)$ of probability measures with finite second moment. Thus, restraining the domain of the drift function $b$ to $[0,T] \times \Rbb^d \times \Pcal_2(\Rbb^d)$ enables the introduction of the Lions derivative $\nabla_\mu b(t,y,\cdot)$ for every $t\in [0,T]$ and $y \in \Rbb^d$. For an introduction to this topic we refer the reader to \cite{Cardaliaguet_NotesOnMeanFieldGames}. The analysis in \cite{BuckdahnLiPengRainer_MFSDEandAssociatedPDE} and \cite{crisan2018smoothing} of the first variation process $\nabla_x X_t^x$ suggests that the representation
	\begin{align*}
		\nabla_x b\left(t,y,\Pbb_{X_t^x}\right) = \EW{ \nabla_\mu b\left(t,y,\Pbb_{X_t^x}\right)(X_t^x) \nabla_x X_t^x}
	\end{align*}
	holds. Note that the Lions derivative entails an additional variable which is here denoted by $\nabla_\mu b\left( \cdot \right)(X_t^x)$.
\end{remark}

\subsection{Hölder continuity}\label{sec:Hölder}
	Concluding the section on regularity properties, we show Hölder continuity in time and space of the unique strong solution $(X_t^x)_{t\in[0,T]}$ of mean-field SDE \eqref{eq:MFSDEAdditive}.
	
\begin{theorem}\label{thm:Hoelder}
	Suppose the drift coefficient $b:[0,T] \times \Rbb^d \times \Pcal_1(\Rbb^d) \to \Rbb^d$ is Lipschitz continuous in the third variable \eqref{eq:Lipschitz} and bounded. Let $(X_t^x)_{t \in [0,T]}$ be the unique strong solution of mean-field SDE \eqref{eq:MFSDEAdditive}. Then, for every compact subset $K \subset \Rbb^d$ there exists a constant $C>0$ such that for all $s,t \in [0,T]$ and $x,y \in K$,
	\begin{align*}
		\Ebb\left[ \left\Vert X_t^x - X_s^y \right\Vert^2 \right] \leq C \left( \vert t-s \vert + \Vert x-y \Vert^2 \right).
	\end{align*}
	In particular, there exists a continuous version of the random field $(t,x) \mapsto X_t^x$ with Hölder continuous trajectories of order $\alpha< \frac{1}{2}$ in $t \in [0,T]$ and $\alpha<1$ in $x\in \Rbb^d$.
\end{theorem}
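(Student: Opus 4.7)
The plan is to split, for $s \leq t$ and $x, y$ in a compact set $K \subset \Rbb^d$, the increment using the triangle inequality into a pure spatial and a pure temporal part,
\[
\left\Vert X_t^x - X_s^y \right\Vert^2 \leq 2\left\Vert X_t^x - X_t^y \right\Vert^2 + 2\left\Vert X_t^y - X_s^y \right\Vert^2,
\]
and bound each contribution separately. I would carry out the argument in $L^p$ for every $p \geq 2$, both to streamline the two estimates and in preparation for applying Kolmogorov's continuity criterion at the end to obtain the Hölder modification.

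For the temporal increment, mean-field SDE \eqref{eq:MFSDEAdditive} directly yields
\[
X_t^y - X_s^y = \int_s^t b\left(r, X_r^y, \Pbb_{X_r^y}\right) dr + (B_t - B_s).
\]
Since $b$ is bounded, the drift term is pathwise controlled by $C|t-s|$, and the Brownian increment contributes the standard $C_p |t-s|^{p/2}$ moment, giving $\Ebb[\Vert X_t^y - X_s^y \Vert^p] \lesssim |t-s|^{p/2}$ uniformly in $y \in K$. Specialising to $p=2$ produces the $|t-s|$ term of the theorem.

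For the spatial increment, the direct approach via the Sobolev derivative from \Cref{thm:Sobolev} is delicate because differentiability is only in a weak sense, so I would instead use the approximation framework of \Cref{sec:Sobolev}: let $\{b_n\}_{n\geq 1}$ be the approximating drifts \eqref{eq:approxDrift} with corresponding strong solutions $X^{n,x}$. By \Cref{prop:SobolevReg}, the map $x \mapsto X_t^{n,x}$ is a.s.\ Lipschitz, so by absolute continuity along the segment from $y$ to $x$,
\[
X_t^{n,x} - X_t^{n,y} = \int_0^1 \nabla_x X_t^{n, y + \lambda(x-y)} (x-y)\, d\lambda.
\]
Taking $L^p$-norms, applying Minkowski's integral inequality, and invoking the uniform-in-$n$ bound of \Cref{lem:boundFV} on $\overline{\conv(K)}$ yields $\Ebb[\Vert X_t^{n,x} - X_t^{n,y}\Vert^p] \lesssim \Vert x-y\Vert^p$ with constant independent of $n$. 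By \Cref{prop:L2Convergence} we have $X_t^{n_k,z} \to X_t^z$ in $L^2(\Omega, \Fcal_t)$ along a common subsequence for each $z$; extracting a further a.s.\ convergent subsequence and applying Fatou's lemma transfers the estimate to $X^x, X^y$. Combined with the temporal bound, this gives the stated $L^2$ inequality, and the $L^p$ variant $\Ebb[\Vert X_t^x - X_s^y\Vert^p] \leq C_p(|t-s|^{p/2} + \Vert x-y\Vert^p)$. Choosing $p$ large enough so that Kolmogorov's continuity theorem for the $(d+1)$-parameter random field $(t,x) \mapsto X_t^x$ applies with the desired exponents then delivers the Hölder modification, with exponents strictly below $1/2$ in time and $1$ in space.

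The main obstacle is the passage from the uniform-in-$n$ spatial Lipschitz bound on the approximations to the non-approximated solution: the convergence from \Cref{prop:L2Convergence} is merely in $L^2(\Omega, \Fcal_t)$ and along a subsequence, whereas I need a pointwise-in-$(x,y)$ inequality on $K$ that is compatible with the a.e.-defined weak derivative. Handling this carefully, together with the $\esssup_{x\in K}$ nature of the first-variation bound in \Cref{lem:boundFV}, is the delicate technical step. This is the same difficulty addressed in the one-dimensional case of \cite{Bauer_StrongSolutionsOfMFSDEs}, and I expect its resolution here to follow the same scheme.
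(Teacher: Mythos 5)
Your proposal is correct and follows essentially the same route as the paper: the paper reduces \Cref{thm:Hoelder} to the uniform $L^p$ bound on the first variation process (obtained from \Cref{lem:boundFV} together with Fatou's lemma) for the spatial increment, and to boundedness of $b$ plus the Brownian increment for the temporal part, exactly as you do. The remaining details, including the handling of the $\esssup$ and the passage from the approximations $X^{n,x}$ to $X^x$, are deferred to the one-dimensional argument in \cite[Theorem 3.12]{Bauer_StrongSolutionsOfMFSDEs}, which is the same resolution you anticipate.
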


	The proof of \Cref{thm:Hoelder} is analogous to the proof of \cite[Theorem 3.12]{Bauer_StrongSolutionsOfMFSDEs} and uses the following lemma.
	
\begin{lemma}
	Suppose the drift coefficient $b:[0,T] \times \Rbb^d \times \Pcal_1(\Rbb^d) \to \Rbb^d$ is Lipschitz continuous in the third variable \eqref{eq:Lipschitz} and bounded. Let $(X_t^x)_{t \in [0,T]}$ be the unique strong solution of mean-field SDE \eqref{eq:MFSDEAdditive}. Then, for every compact subset $K \subset \Rbb^d$ and $p\geq 1$, there exists a constant $C>0$ such that
	\begin{align*}
		\sup_{t\in[0,T]} \esssup_{x\in K} \Ebb\left[ \left\Vert \nabla_x X_t^x \right\Vert^p \right] \leq C.
	\end{align*}
\end{lemma}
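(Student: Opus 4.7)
The plan is to transfer the uniform $L^p$-bound from \Cref{lem:boundFV} for the approximating sequence $\{X^{n,x}\}_{n\geq 1}$ to the limit $X^x$ by combining weak compactness with lower semicontinuity of the $L^p$-norm. I would first handle $p\geq 2$ directly and then recover $1\leq p<2$ from the $p=2$ bound via Jensen's inequality.

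Fix a compact set $K\subset\Rbb^d$ and choose a bounded open $V$ with $K\subset V$ and $\overline{V}$ compact. For any open ball $B\subset V$, \Cref{lem:boundFV} applied to $\overline{V}$ yields
\begin{align*}
	\sup_{n\geq 1}\int_B\Ebb\!\left[\left\Vert\nabla_x X_t^{n,x}\right\Vert^p\right]dx\leq C|B|,
\end{align*}
uniformly in $t\in[0,T]$, so $\{\nabla_x X_t^{n,\cdot}\}_{n\geq 1}$ is bounded in $L^p(\Omega\times B)$ for every $p\geq 2$. From the weak-compactness step in the proof of \Cref{thm:Sobolev}, for each fixed $t$ a subsequence of $X_t^{n,\cdot}$ converges weakly in $L^2(\Omega,W^{1,2}(V))$ to $X_t^{\cdot}$, and in particular $\nabla_x X_t^{n,\cdot}$ converges weakly in $L^2(\Omega\times B)$ to $\nabla_x X_t^{\cdot}$. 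By reflexivity of $L^p(\Omega\times B)$ and uniqueness of weak limits, I can extract a further subsequence converging weakly in $L^p(\Omega\times B)$ to the same limit $\nabla_x X_t^{\cdot}$.

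Lower semicontinuity of the $L^p$-norm under weak convergence then gives
\begin{align*}
	\int_B\Ebb\!\left[\left\Vert\nabla_x X_t^x\right\Vert^p\right]dx\leq\liminf_{n\to\infty}\int_B\Ebb\!\left[\left\Vert\nabla_x X_t^{n,x}\right\Vert^p\right]dx\leq C|B|.
\end{align*}
Since this averaged bound holds for every ball $B\subset V$, Lebesgue's differentiation theorem yields $\Ebb[\Vert\nabla_x X_t^x\Vert^p]\leq C$ at almost every $x\in K$, and, as $C$ is independent of $t$, taking the supremum over $t\in[0,T]$ closes the argument for $p\geq 2$; the remaining range $1\leq p<2$ follows from Jensen's inequality applied to the $p=2$ bound. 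The main technical point I anticipate is the identification of the $L^p$-weak limit with the Sobolev derivative $\nabla_x X_t^{\cdot}$ constructed in \Cref{thm:Sobolev}, which is automatic from uniqueness of weak limits once the $L^2$-identification from that proof is invoked; everything else is standard once the weak-convergence setup is in place.
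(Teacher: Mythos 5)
Your proof is correct and rests on the same pillars as the paper's one-line argument, namely the uniform bound of \Cref{lem:boundFV} for the approximating sequence followed by a passage to the limit (the paper simply cites ``Fatou's lemma'' for that last step). Your substitution of weak lower semicontinuity of the $L^p$-norm plus the Lebesgue differentiation theorem is in fact the more defensible implementation, since only weak convergence of $\nabla_x X_t^{n,\cdot}$ to $\nabla_x X_t^{\cdot}$ in $L^2(\Omega\times U)$ is established in the proof of \Cref{thm:Sobolev}, and your reduction of the range $1\leq p<2$ to $p=2$ via Jensen correctly accounts for the restriction $p\geq 2$ in \Cref{lem:boundFV}.
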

\begin{proof}
	The result follows immediately by \Cref{lem:boundFV} and Fatou's lemma.
\end{proof}

\section{Bismut-Elworthy-Li type formula}\label{sec:Bismut}
	In this section we establish an integration by parts formula of Bismut-Elworthy-Li type. More precisely, we consider the functional $x \mapsto \Ebb\left[ \Phi(X_t^x) \right]$, where $\Phi$ merely fulfills some integrability condition, and show that it is weakly differentiable. Moreover, we give a probabilistic representation of the derivative $\nabla_x \Ebb\left[ \Phi(X_t^x) \right]$.

\begin{theorem}\label{thm:Bismut}
	Suppose the drift coefficient $b:[0,T] \times \Rbb^d \times \Pcal_1(\Rbb^d) \to \Rbb^d$ is Lipschitz continuous in the third variable \eqref{eq:Lipschitz} and bounded. Let $(X_t^x)_{t \in [0,T]}$ be the unique strong solution of mean-field SDE \eqref{eq:MFSDEAdditive}, $K\subset \Rbb^d$ be a compact subset, $\Phi \in L^{2}(\Rbb^d;\omega_T)$, and $\omega_T$ is as defined in \eqref{eq:weightFunction}. Then, for every open subset $U \subset K$, $t\in[0,T]$, and $1<q<\infty$,
	\begin{align*}
		\left(x \mapsto \Ebb\left[ \Phi(X_t^x) \right] \right) \in W^{1,q}(U),
	\end{align*}
	and for almost all $x \in K$
	\begin{small}	
	\begin{align}\label{eq:Delta}
		\nabla_x \Ebb[\Phi(X_T^x)] = \Ebb \left[ \Phi(X_T^x)  \int_0^T \left( a(s) \nabla_x X_s^x + \nabla_x b\left(s,y,\Pbb_{X_s^x}\right)\vert_{y=X_s^x} \int_0^s a(u) du \right) dB_s \right],
	\end{align}
	\end{small}
	where $a: \Rbb  \to \Rbb$ is any bounded, measurable function such that
	\begin{align*}
		\int_0^T a(s) ds = 1.
	\end{align*}
\end{theorem}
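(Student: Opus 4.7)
The plan is to derive \eqref{eq:Delta} first in a fully regular setting via a Malliavin integration by parts, and then extend by a two-step approximation, first in the drift $b$ and then in the payoff $\Phi$.

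\emph{Step 1 (Regular case).} Assume temporarily $b \in L^\infty([0,T], \Ccal_b^{1,L}(\Rbb^d \times \Pcal_1(\Rbb^d)))$ and $\Phi \in \Ccal_0^\infty(\Rbb^d)$. Starting from identity \eqref{eq:RepFVMD} applied on $[s,T]$, multiply by $a(s)$, integrate in $s$ over $[0,T]$, apply Fubini's theorem to the iterated integral, and use $\int_0^T a(s)\,ds = 1$ to obtain
\begin{align*}
\nabla_x X_T^x = \int_0^T D_s X_T^x \, V_s^{(a)} \, ds, \quad V_s^{(a)} := a(s) \nabla_x X_s^x + \nabla_x b(s, y, \Pbb_{X_s^x})\vert_{y=X_s^x} \int_0^s a(u)\,du.
\end{align*}
The process $V^{(a)}$ is $\Fbb$-adapted. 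Interchanging $\nabla_x$ and $\Ebb$ (justified by Proposition \ref{prop:SobolevReg} and the boundedness of $\nabla \Phi$), applying the Malliavin chain rule $D_s \Phi(X_T^x) = \nabla \Phi(X_T^x) D_s X_T^x$, and invoking the Malliavin duality formula for the adapted integrand $V^{(a)}$ then yields
\begin{align*}
\nabla_x \Ebb[\Phi(X_T^x)] = \Ebb\left[\int_0^T D_s \Phi(X_T^x)\, V_s^{(a)}\,ds\right] = \Ebb\left[\Phi(X_T^x) \int_0^T V_s^{(a)}\,dB_s\right].
\end{align*}

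\emph{Step 2 (Removing regularity of $b$).} Let $\{b_n\}_{n \geq 1}$ be the regular approximating sequence from \eqref{eq:approxDrift} with solutions $X^{n,x}$. For smooth $\Phi$, Step 1 yields \eqref{eq:Delta} with $V^{(a),n}$ in place of $V^{(a)}$. The uniform $L^p$-bounds on $\nabla_x X^{n,x}$ from Lemma \ref{lem:boundFV}, the uniform $L^\infty$-bound \eqref{eq:boundDb} on $\nabla_x b_n$, and Burkholder--Davis--Gundy give $L^p$-control of the stochastic integrals uniformly in $n$. Combining this with the strong convergence $X_T^{n,x} \to X_T^x$ in $L^2(\Omega)$ from Proposition \ref{prop:L2Convergence} and the weak $L^2(\Omega, W^{1,2}(U))$-convergence $\nabla_x X^{n,x} \to \nabla_x X^x$ established in the proof of Theorem \ref{thm:Sobolev}, both sides of the approximate formula pass to the limit after testing against smooth test functions on $U$, yielding \eqref{eq:Delta} for the original $b$ and smooth $\Phi$.

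\emph{Step 3 (Extension to $\Phi \in L^2(\omega_T)$).} Since $b$ is bounded, Girsanov's theorem shows that the law of $X_T^x$ admits a density $p^x$ with respect to Lebesgue measure satisfying $p^x(y) \lesssim \omega_T(y)$ uniformly in $x \in K$. Approximating $\Phi$ by smooth, compactly supported $\Phi_k \to \Phi$ in $L^2(\omega_T)$, one obtains $\Ebb[|\Phi_k(X_T^x) - \Phi(X_T^x)|^2] \to 0$ uniformly in $x \in K$, so both sides of \eqref{eq:Delta} pass to the limit (the right-hand side using the uniform $L^q$-bound on $\int_0^T V_s^{(a)}\,dB_s$ from Step 2). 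The claimed $W^{1,q}(U)$-regularity of $x \mapsto \Ebb[\Phi(X_T^x)]$ finally follows from the explicit representation \eqref{eq:Delta}, Lemma \ref{lem:boundFV}, and the Girsanov density estimate, by the same weak-compactness argument used in the proof of Theorem \ref{thm:Sobolev}.

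The main obstacle is the double limit in Steps 2 and 3: one must combine the merely weak convergence of the Sobolev derivatives $\nabla_x X^{n,x}$ with the possibly unbounded random variables $\Phi(X_T^{n,x})$ arising from $\Phi \in L^2(\omega_T)$. The decisive observation is that the Girsanov density of $X_T^x$ decays precisely like the weight $\omega_T$, furnishing exactly the integrability needed to absorb the growth of $\Phi$ and thereby close the approximation argument.
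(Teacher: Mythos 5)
Your plan follows essentially the same route as the paper: derive the formula for regular $b$ and $\Phi$ from \eqref{eq:RepFVMD} via the Malliavin chain rule and duality, then remove the regularity of $b$ by the approximation \eqref{eq:approxDrift} using \Cref{prop:L2Convergence}, \Cref{lem:boundFV} and the weak convergence from \Cref{thm:Sobolev}, and finally extend to $\Phi\in L^2(\Rbb^d;\omega_T)$ via the Girsanov/Gaussian-density estimate. The only point where your sketch is noticeably thinner than the paper is the limit of the term involving $\nabla_x b_n(s,y,\Pbb_{X_s^{n,x}})\vert_{y=X_s^{n,x}}$, where the evaluation point and the law vary simultaneously; the paper handles this by a Girsanov change of measure to a fixed Brownian path together with \Cref{prop:driftInSobolev} and \Cref{lem:convergenceEcal}, but this is a matter of detail rather than a different idea.
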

\begin{proof}
	First assume that $\Phi \in \Ccal_b^{1,1}(\Rbb^d)$ and let $\lbrace b_n \rbrace_{n\geq 1}$ and $\lbrace X^{n,x} \rbrace_{n\geq 1}$ be defined as in \eqref{eq:approxDrift} and \eqref{eq:approxSDE}, respectively. Due to \Cref{prop:L2Convergence} it is readily seen that
	\begin{align}\label{eq:convergenceEW}
		\EW{\Phi(X_T^{n,x})} \xrightarrow[n\to \infty]{} \EW{\Phi(X_T^x)},
	\end{align}
	for every $t\in [0,T]$ and $x \in K$. Equivalently to \cite[Lemma 4.1]{Bauer_StrongSolutionsOfMFSDEs} it can be shown that $\EW{\Phi(X_t^{n,x})}$ is weakly differentiable in $x$ and
	\begin{align*}
		\nabla_x \EW{\Phi(X_T^{n,x})} = \EW{\Phi'(X_T^{n,x}) \nabla_x X_T^{n,x}}.
	\end{align*}
	Furthermore, using the representation \eqref{eq:RepFVMD} we get for any bounded measurable function $a: \Rbb  \to \Rbb$ with $\int_0^T a(s) ds = 1$ that
	\begin{small}
	\begin{align*}
		\nabla_x X_T^{n,x} &= \int_0^T a(s) \left( D_sX_T^{n,x} \nabla_x X_s^{n,x} + \int_s^T D_r X_T^{n,x} \nabla_x b_n(r, y, \Pbb_{X_r^{n,x}})\big\vert_{y= X_r^{n,x}} dr \right) ds \\
		&= \int_0^T a(s) D_sX_T^{n,x} \nabla_x X_s^{n,x} ds + \int_0^T \int_s^T a(s) D_r X_T^{n,x} \nabla_x b_n(r, y, \Pbb_{X_r^{n,x}})\big\vert_{y= X_r^{n,x}} dr ds.
	\end{align*}
	\end{small}
	Now we look at each term individually starting by the first one. Note first that $\Phi(X_T^{n,x})$ is Malliavin differentiable and thus using the chain rule yields
	\begin{align*}
		\EW{ \Phi'(X_T^{n,x}) \int_0^T a(s) D_sX_T^{n,x} \nabla_x X_s^{n,x} ds} = \EW{\int_0^T a(s) D_s \Phi(X_T^{n,x}) \nabla_x X_s^{n,x} ds}.
	\end{align*}
	Since $s \mapsto a(s) \nabla_x X_s^{n,x}$ is an adapted process and by \Cref{lem:boundFV}
	\begin{align*}
		\EW{\int_0^T \left\Vert a(s) \nabla_x X_s^{n,x} \right\Vert^2 ds} < \infty,
	\end{align*}
	the application of the duality formula \cite[Corollary 4.4]{ProskeDiNunnoOksendal_MalliavinCalculus} yields
	\begin{align*}
		\EW{\int_0^T a(s) D_s \Phi(X_T^{n,x}) \nabla_x X_s^{n,x} ds} = \EW{\Phi(X_T^{n,x}) \int_0^T a(s) \nabla_x X_s^{n,x} dB_s}.
	\end{align*}
	Considering the second term note first that due to \eqref{eq:boundMD} and \eqref{eq:boundDb}
	\begin{align*}
		\sup_{r, s \in [0,T]} \Eabs{\Phi'(X_T^{n,x}) a(s) D_r X_T^{n,x} \nabla_x b_n(r, y, \Pbb_{X_r^{n,x}})\big\vert_{y= X_r^{n,x}}} < \infty.
	\end{align*}
	Consequently, the integral 
	\begin{align*}
		\int_0^T \int_0^T \EW{\Phi'(X_T^{n,x}) a(s) D_r X_T^{n,x} \nabla_x b_n(r, y, \Pbb_{X_r^{n,x}})\big\vert_{y= X_r^{n,x}}} dr ds
	\end{align*}
	exists and is finite by Tonelli's Theorem. Thus, the order of integration can be swapped and we obtain by using once more the duality formula \cite[Corollary 4.4]{ProskeDiNunnoOksendal_MalliavinCalculus} that
	\begin{align*}
		&\EW{\Phi'(X_T^{n,x}) \int_0^T \int_s^T a(s) D_r X_T^{n,x} \nabla_x b_n(r, y, \Pbb_{X_r^{n,x}})\big\vert_{y= X_r^{n,x}} dr ds} \\
		&\quad =\EW{\int_0^T  D_r \Phi(X_T^{n,x})  \nabla_x b_n(r, y, \Pbb_{X_r^{n,x}})\big\vert_{y= X_r^{n,x}} \int_0^r a(s) ds dr} \\
		&\quad =\EW{\Phi(X_T^{n,x}) \int_0^T  \nabla_x b_n(r, y, \Pbb_{X_r^{n,x}})\big\vert_{y= X_r^{n,x}} \int_0^r a(s) ds dB_r}.
	\end{align*}
	Putting all together we obtain representation \eqref{eq:Delta} for $\Phi \in \Ccal_b^{1,1}(\Rbb^d)$ where $b$ and $X^x$ are substituted by $b_n$ and $X^{n,x}$, respectively. \par
	Next, we show that representation \eqref{eq:Delta} is valid also for $b$ and $X^x$. Let $\varphi \in \Ccal_0^\infty(U)$. We prove subsequently that
	\begin{small}
	\begin{align*}
		&\int_U \varphi'(x) \EW{\Phi(X_T^x)} dx \\
		&\quad = - \int_U \varphi(x) \Ebb \left[ \Phi(X_T^x)  \int_0^T \left( a(s) \nabla_x X_s^x + \nabla_x b\left(s,y,\Pbb_{X_s^x}\right)\vert_{y=X_s^x} \int_0^s a(u) du \right) dB_s \right] dx.
	\end{align*}
	\end{small}
	Using \eqref{eq:convergenceEW} we have that
	\begin{align*}
		&\int_U \varphi'(x) \EW{\Phi(X_T^x)} dx \\
		&\quad = - \lim_{n\to \infty} \int_U \varphi(x) \Ebb \left[ \Phi(X_T^{n,x})  \int_0^T \left( a(s) \nabla_x X_s^{n,x} + \nabla_x b_n(s,x) \int_0^s a(u) du \right) dB_s \right] dx \\
		&\quad = - \lim_{n\to \infty} \int_U \varphi(x) \Ebb \left[ \Phi(X_T^{n,x})  \int_0^T a(s) \nabla_x X_s^{n,x} dB_s \right] dx \\
		&\qquad - \lim_{n\to \infty} \int_U \varphi(x) \Ebb \left[ \Phi(X_T^{n,x})  \int_0^T \nabla_x b_n(s,x) \int_0^s a(u) du dB_s \right] dx \\
		&\quad =: - \lim_{n\to \infty} A_n - \lim_{n\to \infty} C_n,
	\end{align*}
	where $b_n(s,x) := b_n\left(s,y,\Pbb_{X_s^{n,x}}\right)\vert_{y=X_s^{n,x}}$, $n\geq 0$. For $A_n$ we further get that
	\begin{align*}
	A_n &= \int_U \varphi(x) \Ebb \left[ \left( \Phi(X_T^{n,x}) - \Phi(X_T^x) \right) \int_0^T a(s) \nabla_x X_s^{n,x} dB_s \right] dx \\
	&\quad + \int_U \varphi(x) \Ebb \left[ \Phi(X_T^x)  \int_0^T a(s) \left( \nabla_x X_s^{n,x} - \nabla_x X_s^x \right) dB_s \right] dx \\
	&\quad + \int_U \varphi(x) \Ebb \left[ \Phi(X_T^x)  \int_0^T a(s) \nabla_x X_s^x dB_s \right] dx \\
	&=: A_n(I) + A_n(II) + \int_U \varphi(x) \Ebb \left[ \Phi(X_T^x)  \int_0^T a(s) \nabla_x X_s^x dB_s \right] dx.
	\end{align*}
	Note that $A_n(I)$ and $A_n(II)$ converge to $0$ due to \Cref{prop:L2Convergence} and \Cref{lem:boundFV}, and the proof of \Cref{thm:Sobolev}, respectively.\par 
	For $B_n$ let us first define the measure change 
	\begin{align*}
		\frac{d \Qbb^n}{d \Pbb} := \Ecal\left( - \int_0^T b_n\left( s, X_s^{n,x}, \Pbb_{X_s^{n,x}} \right) dB_s \right), \quad n\geq 0.
	\end{align*}		
	Note that under $\Qbb^n$ the processes $X^{n,x}$ is Brownian motion. Hence, we get with
	\begin{align*}
		\Ecal_T^n := \Ecal\left( \int_0^T b_n\left( s, B_s^x, \Pbb_{X_s^{n,x}} \right) dB_s \right), \quad n\geq 0,
	\end{align*}
	that
	\begin{small}
	\begin{align*}
		&C_n - C_0 = \int_U \varphi(x) \left( \Ebb \left[ \Phi(B_T^x)  \int_0^T \nabla_x b_n(s,x) \int_0^s a(u) du dB_s ~ \Ecal_T^n \right] \right. \\
		&\quad - \left. \Ebb \left[ \Phi(B_T^x)  \int_0^T \nabla_x b_0(s,x) \int_0^s a(u) du dB_s ~\Ecal_T^0 \right] \right) dx\\
		&\quad - \int_U \varphi(x) \left( \Ebb \left[ \Phi(B_T^x)  \int_0^T \nabla_x b_n(s,x) \int_0^s a(u) du ~ b_n\left(s,B_s^x, \Pbb_{X_s^{n,x}} \right) ds ~ \Ecal_T^n \right] \right. \\
		&\quad - \left. \Ebb \left[ \Phi(B_T^x)  \int_0^T \nabla_x b_0(s,x) \int_0^s a(u) du ~ b\left(s,B_s^x, \Pbb_{X_s^x} \right) ds ~\Ecal_T^0 \right] \right) dx \\
		&=: C_n(I) - C_n(II),
	\end{align*}
	\end{small}
	where $b_n(s,x) := b_n\left(s,y,\Pbb_{X_s^{n,x}}\right)\vert_{y=B_s^x}$, $n\geq 0$. Considering $C_n(I)$ we have due to \eqref{eq:boundDb}
	\begin{align*}
		C_n(I) &\lesssim \int_U \varphi(x) \left( \Ebb \left[ \int_0^T \left\vert \nabla_x b_n\left(s,y,\Pbb_{X_s^{n,x}}\right) - \nabla_x b\left(s,y,\Pbb_{X_s^x} \right) \right\vert_{y=B_s^x}^2 ds \right]^\frac{1}{2} \right. \\
		&\quad + \left. \Ebb \left[ \left\vert \Ecal_T^n - \Ecal_T^0 \right\vert^2 \right] \right) dx.
	\end{align*}
	The first term converges to $0$ due to the proof of \Cref{prop:driftInSobolev} whereas the second term converges to $0$ due to \Cref{lem:convergenceEcal}. Furthermore, for $C_n(II)$ we have
	\begin{small}
	\begin{align*}
		C_n(II) &\lesssim \int_U \varphi(x) \left( C_n(I) + \Ebb \left[ \Phi(B_T^x)  \int_0^T \left\vert b_n\left(s,B_s^x, \Pbb_{X_s^{n,x}} \right) - b\left(s,B_s^x, \Pbb_{X_s^x} \right) \right\vert ds \right] \right) dx,
	\end{align*}
	\end{small}
	which converges to $0$ due to \eqref{eq:convergenceDrift} and dominated convergence. Thus equation \eqref{eq:Delta} holds for $\Phi \in \Ccal_b^{1,1}(\Rbb^d)$. \par
	Lastly, we show that equation \eqref{eq:Delta} holds true for $\Phi \in L^2(\Rbb^d; \omega_T)$. In order to show this, define a sequence $\lbrace \Phi_n \rbrace \subset \Ccal_b^{1,1}(\Rbb^d)$ by standard arguments which approximates $\Phi$ with respect to the norm $L^2(\Rbb^d; \omega_T)$. 
	Note first that
	\begin{align}\label{eq:MultiWellDefined}
		&\Eabs{\Phi(X_T^x)  \int_0^T \left( a(s) \nabla_x X_s^x + \nabla_x b\left(s,y,\Pbb_{X_s^x}\right)\vert_{y=X_s^x} \int_0^s a(u) du \right) dB_s } \\
		&\quad \leq \Ep{\Phi(X_T^x)}{2} \notag\\
		&\qquad \times \Ep{\int_0^T \left( a(s) \nabla_x X_s^x + \nabla_x b(s,X_s^x,\Pbb_{X_s^x})\vert_{y=X_s^x} \int_0^s a(u) du \right) dB_s}{2} \notag \\
		&\quad \leq \Ebb \left[ \left\Vert \Phi(B_T^x) \right\Vert^2 \Ecal\left( \int_0^T b(s,B_s^x,\Pbb_{X_s^x}) dB_s \right) \right]^{\frac{1}{2}} \notag\\
		&\qquad \times \Ebb \left[ \int_0^T \left\Vert a(s) \nabla_x X_s^x + \nabla_x b(s,X_s^x,\Pbb_{X_s^x})\vert_{y=X_s^x} \int_0^s a(u) du \right\Vert^2 du \right]^{\frac{1}{2}} \notag\\
		&\quad \lesssim \Ep{\Phi(B_T^x)}{2} < \infty, \notag
	\end{align}
	where we have used \Cref{lem:boundFV} and \eqref{eq:boundDb}. Thus, expression \eqref{eq:MultiWellDefined} is well-defined. Furthermore, it is readily seen that 
	\begin{align*}
		\EW{\Phi_n(X_T^x)} \xrightarrow[n\to \infty]{} \EW{\Phi(X_T^x)}.
	\end{align*}
	Thus, for any test function $\varphi \in \Ccal_0^\infty(\Rbb^d)$ we have that
	\begin{align*}
		&\int_U \varphi(x) \EW{\Phi(X_T^x)} dx \\
		&\quad = - \lim_{n\to \infty} \int_U \varphi'(x) \Ebb \left[ \Phi_n(X_T^x) \int_0^T \left( a(s) \nabla_x X_s^x + \nabla_x b(s,x) \int_0^s a(u) du \right) dB_s \right] dx \\
		&\quad \lesssim - \lim_{n\to \infty} \int_U \varphi'(x) \Ebb \left[ \left( \Phi_n(X_T^x) - \Phi(X_T^x) \right)^2 \right]^\frac{1}{2} dx \\
		&\qquad - \int_U \varphi'(x) \Ebb \left[ \Phi(X_T^x) \int_0^T \left( a(s) \nabla_x X_s^x + \nabla_x b(s,x) \int_0^s a(u) du \right) dB_s \right] dx \\
		&\quad = - \int_U \varphi'(x) \Ebb \left[ \Phi(X_T^x) \int_0^T \left( a(s) \nabla_x X_s^x + \nabla_x b(s,x) \int_0^s a(u) du \right) dB_s \right] dx,
	\end{align*}
	where $b(s,x) := b(s,y,\Pbb_{X_s^x})\vert_{y=X_s^x}$. Consequently, equation \eqref{eq:Delta} holds for $\Phi \in L^2(\Rbb^d; \omega_T)$.
\end{proof}

\appendix
\section{Technical Results}

Consider the (mean-field) stochastic differential equation
\begin{align}\label{eq:lawSDE}
	dX_t^{x,\mu} = b \left(t,X_t^{x,\mu}, \mu_t \right)dt + dB_t, ~t\in [0,T], ~X_0^{x,\mu} = x \in \Rbb^d,
\end{align}
where $\mu \in \Ccal([0,T]; \Pcal_1(\Rbb^d))$. The following lemmas can be proven similar to \cite[Lemma A.1 \& Lemma A.6]{Bauer_StrongSolutionsOfMFSDEs}.

\begin{lemma}\label{lem:boundDrift}
	Suppose the drift coefficient $b:[0,T] \times \Rbb^d \times \Pcal_1(\Rbb^d) \to \Rbb^d$ is of at most linear growth \eqref{eq:linearGrowth} and $X^{x,\mu}$ is a solution of SDE \eqref{eq:lawSDE}. Then, for every $p\geq 1$ and any compact subset $K\subset \Rbb^d$
	\begin{align*}
		\sup_{x\in K} \Ebb \left[ \sup_{t\in [0,T]}  \Vert b\left(t,X_t^{x,\mu}, \mu_t \right) \Vert^p \right] < \infty.
	\end{align*}
	In particular,
	\begin{align*}
		\sup_{x\in K} \Ebb \left[ \sup_{t\in [0,T]} \Vert X_t^{x,\mu} \Vert^p \right] < \infty.
	\end{align*}
	Moreover, for a set of measures $E_C := \lbrace \mu \in \Ccal([0,T]; \Pcal_1(\Rbb)) : \sup_{t\in [0,T]} \Kcal(\mu_t, \delta_0) \leq C \rbrace$, where $C>0$ is some constant, and every $p\geq 1$
	\begin{align*}
		\sup_{x\in K} \Ebb \left[ \sup_{t\in [0,T]} \sup_{\mu \in E_C} \Vert b\left(t,X_t^{x,\mu}, \mu_t \right) \Vert^p \right] < \infty.
	\end{align*}
\end{lemma}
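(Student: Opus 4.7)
The approach is a standard a priori estimate for SDEs with linear growth drift, culminating in a pathwise Grönwall argument. The main observation is that the compensation between $b$ being linearly bounded in $\|y\|$ on the one hand and in $\Kcal(\mu_s,\delta_0)$ on the other hand allows one to close the loop once one realizes that the latter quantity is uniformly bounded for $\mu \in \Ccal([0,T];\Pcal_1(\Rbb^d))$.

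First, I would start from the integral form of SDE \eqref{eq:lawSDE}, take norms, and invoke linear growth \eqref{eq:linearGrowth} to obtain
\begin{align*}
	\|X_t^{x,\mu}\| \leq \|x\| + C\int_0^t \bigl( 1 + \|X_s^{x,\mu}\| + \Kcal(\mu_s,\delta_0) \bigr)\, ds + \sup_{u \leq T} \|B_u\|.
\end{align*}
Since $\mu \in \Ccal([0,T];\Pcal_1(\Rbb^d))$, the map $s \mapsto \Kcal(\mu_s,\delta_0)$ is continuous on $[0,T]$, hence bounded by some finite $M_\mu$; for $\mu \in E_C$ we have $M_\mu \leq C$ by definition of $E_C$. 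Raising to the $p$-th power, taking supremum over $u \leq t$, using $(a+b+c+d)^p \lesssim a^p + b^p + c^p + d^p$ and Jensen's inequality on the time integral, and finally taking expectation (with $\Ebb[\sup_{u \leq T}\|B_u\|^p] < \infty$ by Doob), I get
\begin{align*}
	\Ebb\bigl[\sup_{u \leq t} \|X_u^{x,\mu}\|^p\bigr] \lesssim \|x\|^p + 1 + M_\mu^p + \int_0^t \Ebb\bigl[\sup_{u \leq s} \|X_u^{x,\mu}\|^p\bigr]\, ds.
\end{align*}
Grönwall's lemma then yields $\Ebb[\sup_{t \leq T}\|X_t^{x,\mu}\|^p] \leq C_{p,T}(1 + \|x\|^p + M_\mu^p)$, which is the second displayed inequality. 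Reinserting this bound into the linear growth estimate \eqref{eq:linearGrowth} gives the first displayed inequality, both uniformly over the compact $K$.

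For the third inequality, I would instead apply Grönwall's inequality \emph{pathwise} before taking expectation, obtaining
\begin{align*}
	\sup_{t \leq T} \|X_t^{x,\mu}(\omega)\| \leq \bigl( \|x\| + CT(1+M_\mu) + \sup_{u \leq T}\|B_u(\omega)\| \bigr) e^{CT}.
\end{align*}
Since the right-hand side depends on $\mu$ only through $M_\mu$, and $M_\mu \leq C$ on $E_C$, this bound is uniform over $\mu \in E_C$ pathwise. Plugging back into linear growth produces a $\mu$-uniform pathwise control of $\|b(t,X_t^{x,\mu},\mu_t)\|$ by an $L^p(\Omega)$ random variable, from which the third inequality follows after raising to the $p$-th power and taking expectation.

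\textbf{Main obstacle.} The most delicate point is the measurability of $\sup_{\mu \in E_C}\|b(t,X_t^{x,\mu},\mu_t)\|$, since $E_C$ is uncountable. This is mitigated by the fact that the pathwise upper bound above dominates the expression by a single integrable random variable independent of $\mu$, so the supremum over $\mu$ is well-defined and dominated in the sense of outer expectation; measurability in the strict sense can be restored by exploiting continuity of $\mu \mapsto X^{x,\mu}$ in the Kantorovich metric together with separability of $E_C$ under a suitable topology, reducing the supremum to a countable dense subset. Once measurability is addressed, the bound is immediate from the pathwise estimate.
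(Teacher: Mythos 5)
Your argument is correct and is essentially the proof the paper intends: the paper only refers to the one-dimensional analogue in \cite{Bauer_StrongSolutionsOfMFSDEs}, whose proof is exactly this pathwise Grönwall estimate exploiting that $\sup_{t\in[0,T]}\Kcal(\mu_t,\delta_0)$ is finite for $\mu\in\Ccal([0,T];\Pcal_1(\Rbb^d))$ and bounded by $C$ on $E_C$, combined with $\Ebb[\sup_{u\leq T}\Vert B_u\Vert^p]<\infty$ and reinsertion into the linear growth bound. Your treatment of the measurability of the supremum over the uncountable set $E_C$ via domination by a single integrable random variable is a reasonable (and adequately acknowledged) way to handle a point the paper also leaves implicit.
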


\begin{lemma}\label{lem:convergenceEcal}
	Suppose the drift coefficient $b:[0,T] \times \Rbb^d \times \Pcal_1(\Rbb^d) \to \Rbb^d$ is Lipschitz continuous in the third variable \eqref{eq:Lipschitz} and bounded. Let $(X_t^x)_{t \in [0,T]}$ be the unique strong solution of mean-field SDE \eqref{eq:MFSDEAdditive}. Furthermore, $\lbrace b_n \rbrace_{n\geq 1}$ is the approximating sequence of $b$ as defined in \eqref{eq:approxDrift} and $(X_t^{n,x})_{t\in[0,T]}$, $n\geq 1$, the corresponding unique strong solutions of mean-field SDEs \eqref{eq:approxSDE}. Then for any $p\geq 1$
	\begin{align*}
		\Ebb \left[ \left| \Ecal \left( \int_0^T b_n(t,B_t^x,\Pbb_{X_t^{n,x}}) dB_t \right) - \Ecal \left( \int_0^T b(t,B_t^x,\Pbb_{X_t^x}) dB_t \right) \right|^p \right]^{\frac{1}{p}} \xrightarrow[n\to\infty]{} 0.
	\end{align*}
\end{lemma}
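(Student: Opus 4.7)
The plan is to combine an $L^2$ convergence of the stochastic integrals (and their quadratic variations) with a uniform $L^q$ estimate for the stochastic exponentials, and then upgrade convergence-in-probability to $L^p$ convergence via Vitali's theorem.

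\textbf{Step 1.} Denote $M_T^n := \int_0^T b_n(t,B_t^x,\Pbb_{X_t^{n,x}})\,dB_t$ and $\langle M^n\rangle_T := \int_0^T \Vert b_n(t,B_t^x,\Pbb_{X_t^{n,x}})\Vert^2\,dt$, with $M^0, \langle M^0\rangle$ the analogous quantities for $b$. First I show $M_T^n \to M_T^0$ in $L^2(\Omega)$. By It\^o's isometry,
\begin{align*}
\Ebb\bigl[\Vert M_T^n - M_T^0\Vert^2\bigr] \leq \Ebb\!\left[\int_0^T \left\Vert b_n(t,B_t^x,\Pbb_{X_t^{n,x}}) - b(t,B_t^x,\Pbb_{X_t^x})\right\Vert^2 dt\right].
\end{align*}
I split this via the triangle inequality into two pieces: (i) a term dominated by $C^2\int_0^T \Kcal(\Pbb_{X_t^{n,x}},\Pbb_{X_t^x})^2\,dt \leq C^2\int_0^T \Ebb[\Vert X_t^{n,x}-X_t^x\Vert]^2\,dt$, using the uniform Lipschitz property of $b_n$ in the measure variable and the defining duality of the Kantorovich metric --- this vanishes by \Cref{prop:L2Convergence}; and (ii) $\Ebb[\int_0^T \Vert b_n(t,B_t^x,\Pbb_{X_t^x}) - b(t,B_t^x,\Pbb_{X_t^x})\Vert^2\,dt]$, which vanishes by dominated convergence. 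The latter uses that $b_n \to b$ pointwise in $\mu$ and a.e.~in $(t,y)$, that the law of $B_t^x$ is absolutely continuous with respect to Lebesgue measure so the exceptional $(t,y)$-null set is negligible, and that $\sup_n \Vert b_n \Vert_\infty \leq C$ furnishes a uniform envelope. The same decomposition yields $\langle M^n\rangle_T \to \langle M^0\rangle_T$ in $L^1(\Omega)$.

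\textbf{Step 2.} Passing to an almost surely convergent subsequence and applying the continuous mapping theorem to the exponential, I conclude that $\Ecal_T^n := \exp(M_T^n - \tfrac{1}{2}\langle M^n\rangle_T) \to \Ecal_T^0$ in probability (and since every subsequence has a further a.s.-convergent subsequence with the same limit, the full sequence converges in probability).

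\textbf{Step 3.} To upgrade to $L^p$ convergence, I establish the uniform estimate, valid for every $q > 1$,
\begin{align*}
\sup_{n \geq 0} \Ebb\bigl[(\Ecal_T^n)^q\bigr] \leq \exp\!\left(\tfrac{q(q-1)}{2}\,C^2 T\right),
\end{align*}
by writing $(\Ecal_T^n)^q = \Ecal(qM^n)_T \cdot \exp\!\bigl(\tfrac{q^2-q}{2}\langle M^n\rangle_T\bigr)$, noting that $\Ecal(qM^n)_T$ is a true martingale of unit expectation (Novikov's criterion is trivially satisfied because $\Vert b_n\Vert_\infty \leq C$), and bounding $\langle M^n\rangle_T \leq C^2 T$ pointwise. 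Fixing any $q > p$, this gives uniform integrability of $\{\vert \Ecal_T^n - \Ecal_T^0\vert^p\}_{n\geq 1}$, and Vitali's convergence theorem then promotes the convergence in probability from Step 2 to convergence in $L^p(\Omega)$.

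The main obstacle is the Lebesgue-measure convergence step Step 1(ii): one must ensure that the $a.e.$ pointwise convergence of $b_n$ survives the substitution of the random argument $B_t^x$, which is why the absolute continuity of the Brownian law relative to Lebesgue measure is essential; elsewhere everything is standard. Given that, the argument is a mild adaptation of \cite[Lemma A.6]{Bauer_StrongSolutionsOfMFSDEs} with the additional wrinkle that $b_n$ now depends on the evolving law $\Pbb_{X_t^{n,x}}$, handled through the Lipschitz-in-measure assumption.
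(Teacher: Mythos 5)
Your argument is correct and is exactly the route the paper intends: it defers the proof to the one-dimensional analogue in \cite[Lemma A.6]{Bauer_StrongSolutionsOfMFSDEs}, and your Steps 1--3 (It\^o isometry combined with the uniform Lipschitz-in-measure bound and dominated convergence using $\sup_n\Vert b_n\Vert_\infty\leq C$, then uniform $L^q$ moments of the stochastic exponentials via Novikov and Vitali's theorem) are precisely the needed adaptation, with the law-dependence handled through \eqref{eq:Lipschitz} and \Cref{prop:L2Convergence} just as the paper does in \eqref{eq:convergenceDrift}. The only point worth flagging is that \Cref{prop:L2Convergence} yields convergence along a subsequence, but the paper's standing remark after that proposition already assumes without loss of generality that the whole sequence converges, so your use of it is consistent.
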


\begin{footnotesize}
	\bibliography{literatureTH}

\begin{thebibliography}{10}

\bibitem{MeyerBrandisBanosDuedahlProske_ComputingDeltas}
D.~Ba{\~n}os, T.~Meyer-Brandis, F.~Proske, and S.~Duedahl.
\newblock {Computing Deltas Without Derivatives}.
\newblock {\em Finance and Stochastics}, 21(2):509--549, 2017.

\bibitem{Bauer_StrongSolutionsOfMFSDEs}
M.~Bauer, T.~Meyer-Brandis, and F.~Proske.
\newblock {Strong solutions of mean-field stochastic differential equations
  with irregular drift}.
\newblock {\em Electronic Journal of Probability}, 23, 2018.

\bibitem{BuckdahnDjehicheLiPeng_MFBSDELimitApproach}
R.~Buckdahn, B.~Djehiche, J.~Li, and S.~Peng.
\newblock {Mean-field backward stochastic differential equations: a limit
  approach}.
\newblock {\em The Annals of Probability}, 37(4):1524--1565, 2009.

\bibitem{BuckdahnLiPeng_MFBSDEandRelatedPDEs}
R.~Buckdahn, J.~Li, and S.~Peng.
\newblock {Mean-field backward stochastic differential equations and related
  partial differential equations}.
\newblock {\em Stochastic Processes and their Applications},
  119(10):3133--3154, 2009.

\bibitem{BuckdahnLiPengRainer_MFSDEandAssociatedPDE}
R.~Buckdahn, J.~Li, S.~Peng, and C.~Rainer.
\newblock {Mean-field stochastic differential equations and associated PDEs}.
\newblock {\em The Annals of Probability}, 45(2):824--878, 2017.

\bibitem{Cardaliaguet_NotesOnMeanFieldGames}
P.~Cardaliaguet.
\newblock {Notes on Mean Field Games (from P.-L. Lions' lectures at Coll{\`e}ge
  de France}.
\newblock {\em Available on the website of Coll{\`e}ge de France
  (https://www.ceremade.dauphine.fr/~cardalia/MFG100629.pdf)}, 2013.

\bibitem{CarmonaDelarue_ProbabilisticAnalysisofMFG}
R.~Carmona and F.~Delarue.
\newblock {Probabilistic Analysis of Mean-Field Games.}
\newblock {\em SIAM Journal on Control and Optimization}, 51(4):2705--2734,
  2013.

\bibitem{CarmonaDelarue_MasterEquation}
R.~Carmona and F.~Delarue.
\newblock {The master equation for large population equilibriums}.
\newblock {\em {Stochastic Analysis and Applications 2014}}, pages 77--128,
  2014.

\bibitem{CarmonaDelarue_FBSDEsandControlledMKV}
R.~Carmona and F.~Delarue.
\newblock {Forward--backward stochastic differential equations and controlled
  McKean--Vlasov dynamics}.
\newblock {\em The Annals of Probability}, 43(5):2647--2700, 09 2015.

\bibitem{CarmonaDelarue_Book}
R.~Carmona and F.~Delarue.
\newblock {\em {Probabilistic Theory of Mean Field Games with Applications
  I-II}}.
\newblock Springer, 2018.

\bibitem{CarmonaDelarueLachapelle_ControlofMKVvsMFG}
R.~Carmona, F.~Delarue, and A.~Lachapelle.
\newblock {Control of McKean--Vlasov dynamics versus mean field games}.
\newblock {\em Mathematics and Financial Economics}, 7(2):131--166, 2013.

\bibitem{CarmonaFouqueMousaviSun_SystemicRiskandStochasticGameswithDelay}
R.~Carmona, J.-P. Fouque, S.~M. Mousavi, and L.-H. Sun.
\newblock {Systemic Risk and Stochastic Games with Delay}.
\newblock {\em Journal of Optimization Theory and Applications},
  179(2):366--399, 2018.

\bibitem{CarmonaFouqueSun_MFGandSystemicRisk}
R.~Carmona, J.-P. Fouque, and L.-H. Sun.
\newblock {Mean field games and systemic risk}.
\newblock {\em Communications in Mathematical Sciences}, 13(4):911--933, 2015.

\bibitem{CarmonaLacker_ProbabilisticWeakFormulationofMFGandApplications}
R.~Carmona and D.~Lacker.
\newblock {A probabilistic weak formulation of mean field games and
  applications}.
\newblock {\em The Annals of Applied Probability}, 25(3):1189--1231, 2015.

\bibitem{Chiang_MKVWithDiscontinuousCoefficients}
T.~Chiang.
\newblock {McKean-Vlasov equations with discontinuous coefficients}.
\newblock {\em Soochow J. Math}, 20(4):507--526, 1994.

\bibitem{crisan2018smoothing}
D.~Crisan and E.~McMurray.
\newblock {Smoothing properties of McKean--Vlasov SDEs}.
\newblock {\em Probability Theory and Related Fields}, 171(1-2):97--148, 2018.

\bibitem{de2015strong}
P.~C. de~Raynal.
\newblock {Strong well posedness of McKean-Vlasov stochastic differential
  equations with H{\"o}lder drift}.
\newblock {\em Stochastic Processes and their Applications}, 130(1):79 -- 107,
  2020.

\bibitem{ProskeDiNunnoOksendal_MalliavinCalculus}
G.~{Di Nunno}, B.~{\O}ksendal, and F.~Proske.
\newblock {\em {Malliavin Calculus for L{\'e}vy Processes with Applications to
  Finance}}.
\newblock {Universitext}. Springer-Verlag Berlin Heidelberg, first edition,
  2009.

\bibitem{FouqueIchiba_StabilityinaModelofInterbankLending}
J.-P. Fouque and T.~Ichiba.
\newblock {Stability in a model of inter-bank lending}.
\newblock {\em SIAM Journal on Financial Mathematics}, 4:784--803, 2013.

\bibitem{FouqueSun_SystemicRiskIllustrated}
J.-P. Fouque and L.-H. Sun.
\newblock {Systemic risk illustrated}.
\newblock {\em Handbook on Systemic Risk}, pages 444--452, 2013.

\bibitem{GarnierPapanicolaouYang_LargeDeviationsforMFModelofSystemicRisk}
J.~Garnier, G.~Papanicolaou, and T.-W. Yang.
\newblock {Large deviations for a mean field model of systemic risk}.
\newblock {\em SIAM Journal on Financial Mathematics}, 4(1):151--184, 2013.

\bibitem{Jones_FundamentalInequalities}
G.~S. Jones.
\newblock {Fundamental inequalities for discrete and discontinuous functional
  equations}.
\newblock {\em Journal of the Society for Industrial and Applied Mathematics},
  12(1):43--57, 1964.

\bibitem{JourdainMeleardWojbor_NonlinearSDEs}
B.~Jourdain, S.~M{\'e}l{\'e}ard, and W.~A. Woyczynski.
\newblock {Nonlinear SDEs driven by L{\'e}vy processes and related PDEs}.
\newblock {\em ALEA, Latin American Journal of Probability}, 4:1--29, 2008.

\bibitem{Kac_FoundationsOfKineticTheory}
M.~Kac.
\newblock {Foundations of Kinetic Theory}.
\newblock In {\em {Proceedings of the Third Berkeley Symposium on Mathematical
  Statistics and Probability, Volume 3: Contributions to Astronomy and
  Physics}}, pages 171--197, Berkeley, Calif., 1956. University of California
  Press.

\bibitem{KleyKlueppelbergReichel_SystemicRiskTroughContagioninaCorePeripheryStructuredBankingNetwork}
O.~Kley, C.~Kl{\"u}ppelberg, and L.~Reichel.
\newblock {Systemic risk through contagion in a core-periphery structured
  banking network}.
\newblock {\em Banach Center Publications}, 104(1):133--149, 2015.

\bibitem{krylov1969ito}
N.~Krylov.
\newblock {On Ito's stochastic integral equations}.
\newblock {\em Theory of Probability \& Its Applications}, 14(2):330--336,
  1969.

\bibitem{LasryLions_MeanFieldGames}
J.-M. Lasry and P.-L. Lions.
\newblock {Mean field games}.
\newblock {\em Japanese Journal of Mathematics}, 2(1):229--260, 2007.

\bibitem{Leoni}
G.~Leoni.
\newblock {\em {A first course in Sobolev spaces}}, volume 105.
\newblock American Mathematical Society Providence, RI, 2009.

\bibitem{LiMin_WeakSolutions}
J.~Li and H.~Min.
\newblock {Weak Solutions of Mean-Field Stochastic Differential Equations and
  Application to Zero-Sum Stochastic Differential Games}.
\newblock {\em SIAM Journal on Control and Optimization}, 54(3):1826--1858,
  2016.

\bibitem{mahmudov2006mckean}
N.~Mahmudov and M.~McKibben.
\newblock {McKean-Vlasov stochastic differential equations in Hilbert spaces
  under Caratheodory conditions}.
\newblock {\em Dynamic Systems and Applications}, 15(3/4):357, 2006.

\bibitem{McKean_AClassOfMarkovProcess}
H.~P. McKean.
\newblock {A class of Markov processes associated with nonlinear parabolic
  equations}.
\newblock {\em Proceedings of the National Academy of Sciences of the United
  States of America}, 56(6):1907--1911, 1966.

\bibitem{MenoukeuMeyerBrandisNilssenProskeZhang_VariationalApproachToTheConstructionOfStrongSolutions}
O.~Menoukeu-Pamen, T.~Meyer-Brandis, T.~Nilssen, F.~Proske, and T.~Zhang.
\newblock {A variational approach to the construction and Malliavin
  differentiability of strong solutions of SDE's}.
\newblock {\em Mathematische Annalen}, 357(2):761--799, 2013.

\bibitem{MishuraVeretennikov_SolutionsOfMKV}
Y.~Mishura and A.~Veretennikov.
\newblock {Existence and uniqueness theorems for solutions of McKean--Vlasov
  stochastic equations}.
\newblock {\em {arXiv preprint arXiv:1603.02212}}, 2016.

\bibitem{veretennikov1981strong}
A.~Veretennikov.
\newblock {On strong solutions and explicit formulas for solutions of
  stochastic integral equations}.
\newblock {\em Sbornik: Mathematics}, 39(3):387--403, 1981.

\bibitem{Vlasov_VibrationalPropertiesofElectronGas}
A.~Vlasov.
\newblock {The vibrational properties of an electron gas}.
\newblock {\em Soviet Physics Uspekhi}, 10(6):721, 1968.

\end{thebibliography}
	\bibliographystyle{abbrv}
\end{footnotesize}
\bigskip
\rule{\textwidth}{1pt}

\end{document}